\documentclass[11pt]{article}
\usepackage{fullpage}
\usepackage{booktabs}
\usepackage{multirow}
\usepackage[utf8]{inputenc}
\usepackage{amsthm,amssymb}
\usepackage{amsmath}
\usepackage{tcolorbox}
\usepackage{thmtools}
\usepackage{mathtools}
\usepackage{subcaption,thm-restate}
\usepackage[mathcal,mathscr]{eucal}
\usepackage{epsfig,graphicx,graphics,color}
\usepackage{caption}
\usepackage{enumerate}
\usepackage{enumitem}
\usepackage[sort]{natbib}
 \bibpunct[, ]{(}{)}{,}{a}{}{,}%
\usepackage{titling}
\usepackage{hyperref}
\usepackage{verbatim}

\usepackage[margin=1in]{geometry}

\hypersetup{
    colorlinks=true,       
    linkcolor=blue,        
    citecolor=red,         
    filecolor=magenta,     
    urlcolor=cyan,         
    linktocpage=true
}
\usepackage{thm-restate}
\usepackage{cleveref}
\usepackage{float}
\usepackage{algpseudocode}
\usepackage{rotating}
\usepackage{makecell}
\usepackage{pdfpages}
\usepackage{graphicx}
\usepackage{subcaption}
\usepackage{algorithm}

\usepackage{longtable}
\usepackage{booktabs,siunitx,array,threeparttable}
\usepackage{pdflscape}
\usepackage[nolist]{acronym}

\algnewcommand{\Input}[1]{\State \textbf{Input:} #1}
\algnewcommand{\Output}[1]{\State \textbf{Output:} #1}

\acrodef{PTSP}[PTSP]{Probabilistic Traveling Salesman Problem}
\acrodef{TSP}[TSP]{Traveling Salesman Problem}
\acrodef{OP}[OP]{Orienteering Problem}
\acrodef{TSPSC}[TSP-SC]{Traveling Salesman Problem with Service Commitments}
\acrodef{BSH}[BSH]{Adaptive Binomial Sampling Heuristic}
\acrodef{LKH}[LKH]{Lin--Kernighan Heuristic}

\theoremstyle{plain}
\newtheorem{theorem}{Theorem}
\newtheorem{lemma}[theorem]{Lemma}

\theoremstyle{definition}

\renewenvironment{proof}[1]{\par\noindent{\itshape #1}\space\ignorespaces}{\par\medskip}
\newcommand{\Halmos}{\ensuremath{\quad\square}}
\newcommand{\SingleSpacedXII}{}
\newcommand{\ACKNOWLEDGMENT}[1]{\subsection*{Acknowledgments}\noindent #1}

\DeclareMathOperator*{\argmax}{arg\,max}

\def\final{1}  
\def\iflong{\iffalse}
\ifnum\final=0  
\newcommand{\mnote}[1]{{\color{orange}[{\tiny \textbf{Malte:} \bf #1}]\marginpar{\color{orange}*}}}
\newcommand{\jnote}[1]{{\color{red}[{\tiny \textbf{Julian:} \bf #1}]\marginpar{\color{red}*}}}
\newcommand{\lnote}[1]{{\color{blue}[{\tiny \textbf{Lina:} \bf #1}]\marginpar{\color{blue}*}}}
\newcommand{\anote}[1]{{\color{purple}[{\tiny \textbf{Arne:} \bf #1}]\marginpar{\color{purple}*}}}
\else 
\newcommand{\mnote}[1]{}
\newcommand{\jnote}[1]{}
\newcommand{\lnote}[1]{}
\newcommand{\anote}[1]{}
\fi
\usepackage{authblk}

\title{Optimal Service Commitments in Traveling Salesman Problems with Stochastic Demand}

\date{}

\author[1]{Lina Schmidt}
\author[1]{Julian Golak}
\author[1]{Arne Schulz}
\author[1]{Malte Fliedner}

\affil[1]{\footnotesize Institute of Operations Management, University of Hamburg Business School, Hamburg, Germany. \texttt{\{lina.schmidt, julian.golak, arne.schulz, malte.fliedner\}@uni-hamburg.de}}

\begin{document}
\maketitle
\begin{abstract}
\medskip
In ridepooling services, mobility providers must communicate an expected arrival time before customers decide whether to book. An ambitious commitment attracts higher demand than a conservative one, but is correspondingly harder to fulfill. Provider revenue is therefore governed by an endogenous interplay between the announced commitment and the demand it induces.

We study this interplay in a traveling salesman setting in which all customers share a shuttle to a common destination, formalizing it as the Traveling Salesman Problem with Service Commitments (TSP-SC), a two-stage stochastic optimization problem. In the first stage, the provider announces an arrival commitment to all customers interested in the trip, each of whom independently accepts or rejects the offer with a probability decreasing in the committed time. In the second stage, an Orienteering Problem determines a revenue-maximizing subset of accepting customers to be served within the commitment; the remaining customers are rejected.

We show that optimal commitments lie in a finite candidate set induced by the tour lengths of customer subsets, and develop two exact algorithms, a general and a faster variant for homogeneous customers, together with an Adaptive Binomial Sampling Heuristic (BSH) for larger instances. In a numerical study on instances with up to 100 customers, the heuristic deviates from the optimal expected revenue by at most 0.32~\% on average across all exactly solvable sizes. The results further reveal a striking operational regularity: as the customer base grows, the revenue-maximizing policy converges to a regime in which roughly one in three accepting customers is rejected, a rejection rate that remains stable across instance sizes. This tradeoff proves markedly asymmetric: raising the service rate to approximately 80~\% costs only a few percent of optimal revenue, whereas near-perfect reliability requires sacrificing a disproportionate share of it.
\medskip

\noindent \textbf{Keywords:} Ridepooling, Service Commitment, Demand Management, Two-Stage Stochastic Optimization, Traveling Salesman Problem, Orienteering Problem

\end{abstract}

\section{Introduction}
	Dial-a-ride and ridepooling services have become an integral part of urban mobility. Providers such as MOIA, Via, and the shared-ride options of Uber and Lyft (See \url{https://www.moia.io}, \url{https://ridewithvia.com}, \url{https://www.uber.com}, and \url{https://www.lyft.com}, all last accessed July 10, 2026.) operate fleets of shared vehicles that dynamically collect and transport passengers. Thus far, the vast majority of articles dedicated to this application context focus on the efficient pooling of customer requests and the routing of shared vehicles. However, there is another central and closely linked operational challenge that has not gathered much attention: the provider must first communicate an expected arrival time, which --- together with the price of the trip --- is the customer's primary basis for accepting and rejecting the offered ride. Consequently, the quoted time directly shapes demand.
	This creates a fundamental tension. If the provider quotes an ambitious arrival time, more customers are inclined to accept, but the tighter schedule constrains the provider's ability to serve all of them, potentially forcing the rejection of customers. Conversely, a conservative commitment gives the provider more operational flexibility, yet fewer customers will find the offer attractive enough to accept. The provider's revenue, which depends on the number of customers actually served, is thus governed by an endogeneity between the service commitment and the resulting demand.
	
	We capture this endogeneity in a Traveling Salesman Problem in which a single shared vehicle, for instance, a bus or van, transports customers to a common destination. The provider announces an arrival commitment. Each customer independently accepts or rejects the offer with a probability that decreases in the announced arrival time. Once customer decisions are revealed, the provider determines a route that maximizes the revenue from served customers while respecting the committed arrival time. The goal is to find the commitment that maximizes expected revenue.
	
	\subsection{Related Literature}
	
	Our work connects three main strands of literature: demand management and endogenous demand; probabilistic combinatorial optimization; and the \ac{TSP} with profits and the \ac{OP}. We will summarize the main developments in the three subfields and their connections to our work in the following.
	
	\paragraph{Demand management and endogenous demand.}
	A central theme of this work is the endogeneity between a service provider's operational decisions and the resulting customer demand. A rich literature on this interplay has developed within queueing theory. \cite{naor1969regulation} was the first to consider strategic customer behavior in queues, studying the balking decision of customers who observe the queue length before deciding whether to join. This line of research has expanded substantially since then. \cite{economou2021impact} addresses the problem from a game-theoretic perspective, distinguishing between observable and unobservable queues and deriving optimal information provision strategies from both the social welfare and the revenue maximization perspective.
	A particularly relevant strand within this literature examines how the selective disclosure of information influences customer decisions. \cite{simhon2016optimal} and \cite{li2021optimal} analyze when revealing the queue length maximizes throughput or social welfare, while \cite{lingenbrink2019optimal} show that revenue is maximized by a signal that reveals only whether the queue is long.
	These queueing models are conceptually related to the broader fields of Bayesian Persuasion and Information Design. In Bayesian Persuasion, an informed sender designs a signal to influence the action of a receiver who holds a prior belief \citep{kamenica2011bayesian, kamenica2019bayesian}. Information Design extends this framework to settings with multiple receivers \citep{bergemann2019information}.
	
	Our problem shares the core feature of these models in that a provider's decision, the service commitment, directly shapes customer behavior.
	However, the customers' decision to accept or reject the service depends solely on the announced arrival commitment, not on a prior belief that is updated by information provision. 
	Moreover, while the queueing literature typically studies the disclosure of an exogenous system state (queue length, service rate), our service commitment is an endogenous decision variable that simultaneously determines both the acceptance probability and the operational constraints.

    Demand management also arises in the routing literature when customers can be rejected. In a static setting, all requests are known and the provider selects which customers to serve \citep{schulz2024using}; in a dynamic setting, where demand is revealed over time, it can be beneficial to reject a customer now in order to serve more profitable customers later. \citet{fleckenstein2023recent} review demand management in vehicle routing. A closely related stream is time slot management in attended home delivery, where the provider decides which delivery time slots to offer or how to price them, anticipating that the offered slots shape customer demand while routing feasibility constrains what can be offered \citep{agatz2011time, klein2019differentiated}. Our application setting itself belongs to the family of dial-a-ride and ridepooling problems; we refer to \citet{ho2018survey} and \citet{molenbruch2017typology} for surveys of the dial-a-ride problem and to \citet{agatz2012optimization} for a review of dynamic ride-sharing. Our setting combines both levers: the announced service commitment steers the customers' acceptance decisions, and the subsequent tour decision determines which accepting customers are actually served.

	\paragraph{Probabilistic combinatorial optimization.}
	The stochastic demand structure of our problem is closely related to the \ac{PTSP}, introduced by \cite{jaillet1985probabilistic}. In the \ac{PTSP}, each node in a graph is present with a given probability, and the objective is to find an a priori tour of minimum expected length over all nodes; absent nodes are simply skipped. \cite{jaillet1985probabilistic} derives a closed-form expression for efficiently computing the expected tour length and provides exact and heuristic solution approaches. \cite{bertsimas1990priori} study a priori optimization more broadly and show that it achieves asymptotic performance comparable to re-optimization at a fraction of the computational cost. Further structural properties are discussed in \cite{bertsimas1993further}, and \cite{henchiri2014probabilistic} provide a survey. Stochastic customer demand has also been studied extensively for vehicle routing; see, e.g., \citet{secomandi2009reoptimization} for reoptimization approaches to the vehicle routing problem with stochastic demands.
	
	On the algorithmic side, \cite{shmoys2008constant} provide the first constant-factor approximation guarantees, which have since been improved in a series of works \citep{van2011deterministic, van2018priori, blauth2025improved}. Among heuristic approaches, \cite{bowler2003characterization} apply a stochastic annealing method, and \cite{campbell2008probabilistic} introduce the \ac{PTSP} with deadlines and present recourse and chance-constrained models for it.
	
	Our problem differs from the classical \ac{PTSP} in two key aspects. First, the node probabilities are not exogenous but are determined by the provider's arrival commitment, creating the endogeneity discussed above. Second, the objective is not to minimize tour length but to maximize the number of served customers subject to a tour length constraint.
	
	\paragraph{TSP with profits and the Orienteering Problem.}
	Once the arrival commitment is fixed and customer decisions are revealed, the provider's second-stage recourse problem is to select and route a subset of customers that can be served within the committed time. This links our work to the family of \acp{TSP} with profits \citep{feillet2005traveling}, and specifically to the \ac{OP} \citep{tsiligirides1984heuristic}, in which the objective is to maximize the collected reward subject to a travel time constraint. The problem is NP-hard \citep{golden1987orienteering}, and \cite{blum2007approximation} show that it is APX-hard, ruling out a polynomial-time approximation scheme unless $\textsf{P} = \textsf{NP}$. Exact solution approaches include the branch-and-cut methods of \cite{fischetti1998solving} and \cite{kobeaga2024revisited}. Heuristic methods are proposed, for instance, in \cite{golden1987orienteering} and \cite{kobeaga2018efficient}. Upper bounds on the optimal objective value are studied by \cite{leifer1994strong} and \cite{millar1997time}. Comprehensive surveys of the \ac{OP} literature can be found in \cite{vansteenwegen2011orienteering} and \cite{gunawan2016orienteering}. In our problem, the \ac{OP} arises as the second-stage recourse problem, with the key difference that its time budget is not exogenous but is itself the first-stage decision, which simultaneously determines the demand distribution.

	\subsection{Our Contribution}
	
	The streams reviewed above study how operational promises shape demand and how stochastic demand affects routing. In our problem, however, a single arrival commitment plays both roles at once: it determines each customer's acceptance probability and, at the same time, the time budget of the second-stage routing problem. To the best of our knowledge, this coupling has not been studied before, although it arises naturally whenever a dial-a-ride provider quotes an arrival time before customer decisions are known.
	
	We address this gap by introducing the \ac{TSPSC}, a two-stage stochastic optimization problem. In the first stage, the service provider announces an arrival commitment, upon which customers independently accept or reject the offer with probabilities determined by the commitment. In the second stage, the provider finds a revenue-maximizing route among the accepting customers that respects the committed time. Our contributions are as follows.
	
	\begin{itemize}
		\item We analyze the structure of the expected revenue as a function of the arrival commitment. We show that optimal solutions can only occur at a finite (though exponentially large) set of candidate points, which we exploit algorithmically. We further establish that computing the expected revenue for a given commitment is $\textsf{\#P}$-hard for general travel times and customer-specific revenues and acceptance probabilities.
		\item We develop two exact algorithms that trade off generality against runtime. The first is a brute-force approach with runtime $O(n\,2^{2n})$ that supports customer-specific revenues and acceptance probabilities. The second exploits structural properties of the problem, such as the triangle inequality and identical customer probabilities, to achieve an improved runtime of $O(n^2 2^n)$.
		\item We propose a heuristic that estimates the expected revenue via an adaptive sampling approach and identifies near-optimal commitments.
		\item We evaluate all algorithms in a comprehensive numerical study on instances with up to 100 customers. The sampling heuristic is near-optimal, with a mean relative optimality gap of at most 0.32\% on all instances solvable exactly ($n \leq 19$), while scaling far beyond the reach of the exact algorithms. The experiments also yield a managerial insight: as the customer base grows, the revenue-maximizing commitment settles into a regime in which the provider turns away roughly one in three accepting customers, a rejection rate that then remains stable across instance sizes. Moreover, the cost of reliability is asymmetric: the provider can raise the service rate to about 80\% at a revenue reduction of only about 4\%, whereas near-perfect service requires forgoing almost half of the optimal revenue.
	\end{itemize}

	\medskip
	The paper is organized as follows. Section~\ref{sec:problemDef} formally defines the \ac{TSPSC} and illustrates it with an example. Section~\ref{sec:structural} provides structural and complexity results: Section~\ref{sec:revenueFun} characterizes the expected revenue function, and Section~\ref{sec:complexity} establishes that evaluating it is $\textsf{\#P}$-hard. Section~\ref{sec:algorithms} presents two exact algorithms and  introduces the sampling-based heuristic. Section~\ref{sec:experiments} reports on the numerical study. Finally, Section~\ref{sec:conc} summarizes the paper and lists open problems for future research.
	
	\section{Traveling Salesman Problem with Service Commitments}
	\label{sec:problemDef}
	In this section, we introduce the \ac{TSPSC}. We formally define the problem in Section~\ref{sec:formalDef} and illustrate it with an example in Section~\ref{sec:example}. Throughout, $\mathbb{R}_+$ and $\mathbb{Z}_+$ denote the non-negative real numbers and integers, respectively, and we write $[t] \coloneqq \{1, \dots, t\}$ for a positive integer~$t$.
	
	\subsection{Problem Definition}\label{sec:formalDef}
	We consider the operational setting sketched in the introduction: customers have requested transportation to a shared destination, for instance, an airport, a train station, or a major event venue. A provider operating a single shared vehicle must announce an arrival time at the destination before customers decide whether to use the service. The earlier the announced arrival time, the more likely a customer is to accept the offer. Once the interested customers have placed their booking under the offered conditions, the provider is still free to either confirm or disconfirm their bookings. This is a necessary safeguard for the provider since the actual resource utilization is only known once customers have revealed their decisions. For each customer who accepts and is ultimately served, the provider receives a revenue.
	
	Formally, we define the set of customers as $A$ with $n \coloneqq |A|$. Each customer $a \in A$ is characterized by a revenue $r_a \in \mathbb{R}_+$ and a probability function $p_a \colon \mathbb{R}_+ \to [0,1]$, which is non-increasing and represents the probability of customer $a$ accepting the offer in response to an arrival commitment $x \in \mathbb{R}_+$. Customers decide independently and observe neither each other's decisions nor the system state; this corresponds to the independent activation model \citep[see, e.g.,][]{shmoys2008constant}. We treat the acceptance probability functions as given, e.g., estimated from historical offer-acceptance data. We assume that the provider may reject accepting customers at no cost. The operational consequences of this assumption are quantified in Section~\ref{subsec:Managerial}, and we discuss rejection penalties as an extension in Section~\ref{sec:conc}.
	
	The vehicle starts its tour at the customers' destination, which we refer to as the depot $s$. We assume throughout that the vehicle's seating capacity is not binding. Each service request specifies the customer's pick-up location. Together with the depot, these locations define a complete graph $G = (V, E)$, where $V = A \cup \{s\}$ and a function $d \colon E \to \mathbb{R}_+$ assigns a travel time to each edge. We assume that $d$ satisfies the triangle inequality, i.e., $d(u,w) \leq d(u,v) + d(v,w)$ for all $u,v,w \in V$. For any subset $S \subseteq A$, we denote by $\tau(S)$ the length of a shortest tour in $G$ visiting all customers in $S$ and returning to the depot, where $\tau(\emptyset) \coloneqq 0$. Since the vehicle departs from the depot at time zero and returns to it at the end of the tour, all served customers arrive at the destination together upon completion of the tour; the commitment $x$ is therefore kept precisely if the chosen tour has length at most $x$. Customers' acceptance decisions depend only on this arrival commitment; we abstract from pickup times and individual ride durations. Given an arrival commitment $x$, the probability that exactly the customers in $F \subseteq A$ accept the offer is
	\[
	\mathbb{P}(F \mid x) = \prod_{a \in F} p_a(x) \prod_{a \in A \setminus F} (1 - p_a(x)).
	\]
	
	The setting is static: all customers request service before the commitment is announced, decisions are made once, and accepting customers do not subsequently cancel. Because all customers travel to the same destination on the same vehicle, the provider announces a single arrival commitment to all customers. We formulate the problem as a two-stage stochastic optimization problem. In the first stage, the provider selects an arrival commitment $x \in \mathbb{R}_+$. In the second stage, after the set of accepting customers $F$ is realized, the provider determines a route that maximizes revenue while respecting the commitment. The overall objective is to find the commitment that maximizes expected revenue:
	\begin{equation}\label{eq:1stageOpt}
		\sup_{x \in \mathbb{R}_+} \mathbb{E}_{F\mid x}\left[ \text{rev}(x, F)\right].
	\end{equation}
	For a given realization $F$ and commitment $x$, the second-stage revenue is defined as
	\begin{equation}\label{eq:2stageOpt}
		\text{rev}(x, F) = \max \left\{ \sum_{a \in S} r_a \colon S \subseteq F,\; \tau(S) \leq x \right\},
	\end{equation}
	where the provider selects the most profitable subset of accepting customers that can be served within the committed arrival time; that is, $\text{rev}(x, F)$ is the optimal value of an \ac{OP} instance with time budget $x$. We refer to the two-stage problem \eqref{eq:1stageOpt}--\eqref{eq:2stageOpt} as the \ac{TSPSC}.
	
	\subsection{Example}\label{sec:example}
	
	We illustrate the \ac{TSPSC} with a small instance. Consider three customers $a_1$, $a_2$, $a_3$ and a depot $s$, with travel times $d(s,a_1)=2$, $d(s,a_2)=3$, $d(s,a_3)=4$, $d(a_1,a_2)=4$, $d(a_1,a_3)=5$, and $d(a_2,a_3)=6$. Each customer provides a revenue of $r_a = 1$, and the acceptance probability is $p_a(x) = \max(0, 1 - x/50)$. Table~\ref{tab:subset_tourlengths} lists all customer subsets together with their shortest tour lengths $\tau(S)$.
	We consider $x = 11$. The resulting acceptance probability is $p(11) = 0.78$. The probability $\mathbb{P}(F \mid x = 11)$ for each subset $F \subseteq A$ is listed in the third column of Table~\ref{tab:subset_tourlengths}.
	
	\begin{table}[h]
		\centering
		\begin{tabular}{c|c|c|c}
			$F$ & $\tau(F)$ & $\mathbb{P}(F\mid x=11)$ & $\text{rev}(11, F)$\\\hline
			$\emptyset$ & $0$ & $0.0106$ & $0$\\
			$\{a_1\}$ & $4$ & $0.0378$ & $1$\\
			$\{a_2\}$ & $6$ & $0.0378$ & $1$\\
			$\{a_3\}$ & $8$ & $0.0378$ & $1$\\
			$\{a_1,a_2\}$ & $9$ & $0.1338$ & $2$\\
			$\{a_1,a_3\}$ & $11$ & $0.1338$ & $2$\\
			$\{a_2,a_3\}$ & $13$ & $0.1338$ & $1$\\
			$\{a_1,a_2,a_3\}$ & $16$ & $0.4746$ & $2$\\
		\end{tabular}
		\caption{All customer subsets with their shortest tour length, probability of occurrence for $x = 11$, and attainable revenue for the example instance with $p(x) = \max(0, 1 - x/50)$.}
		\label{tab:subset_tourlengths}
	\end{table}
	
	The fourth column of Table~\ref{tab:subset_tourlengths} shows the second-stage revenue $\text{rev}(x, F)$ for each subset. If $\tau(F) \leq x$, all customers in $F$ can be served and $\text{rev}(x, F) = |F|$. Otherwise, the provider serves a largest subset of $F$ whose tour length does not exceed $x$. For instance, if $F = \{a_1, a_2, a_3\}$ with $\tau(F) = 16 > 11 = x$, serving all three customers is infeasible. The best feasible subsets are $\{a_1, a_2\}$ and $\{a_1, a_3\}$ with tour lengths $9$ and $11$, respectively, yielding $\text{rev}(11, F) = 2$. Similarly, for $F = \{a_2, a_3\}$ we have $\tau(F) = 13 > 11$, and since $\tau(\{a_2\}) = 6$ and $\tau(\{a_3\}) = 8$, at most one of the two accepting customers can be served, so $\text{rev}(11, F) = 1$. The expected revenue for $x = 11$ is then
	\begin{align*}
		\mathbb{E}_{F\mid x}\left[\text{rev}(11, F)\right] 
		&= \sum_{F \subseteq A} \text{rev}(11, F) \cdot \mathbb{P}(F \mid x = 11) \\
		&= 1.7316.
	\end{align*}
	Figure~\ref{fig:ExpectedRevenue} plots the expected revenue as a function of the arrival commitment for $x \in [0, 30]$. Its maximum is attained at $x = 16$.

	\begin{figure}[h]
		\centering
		\includegraphics[width=0.6\textwidth]{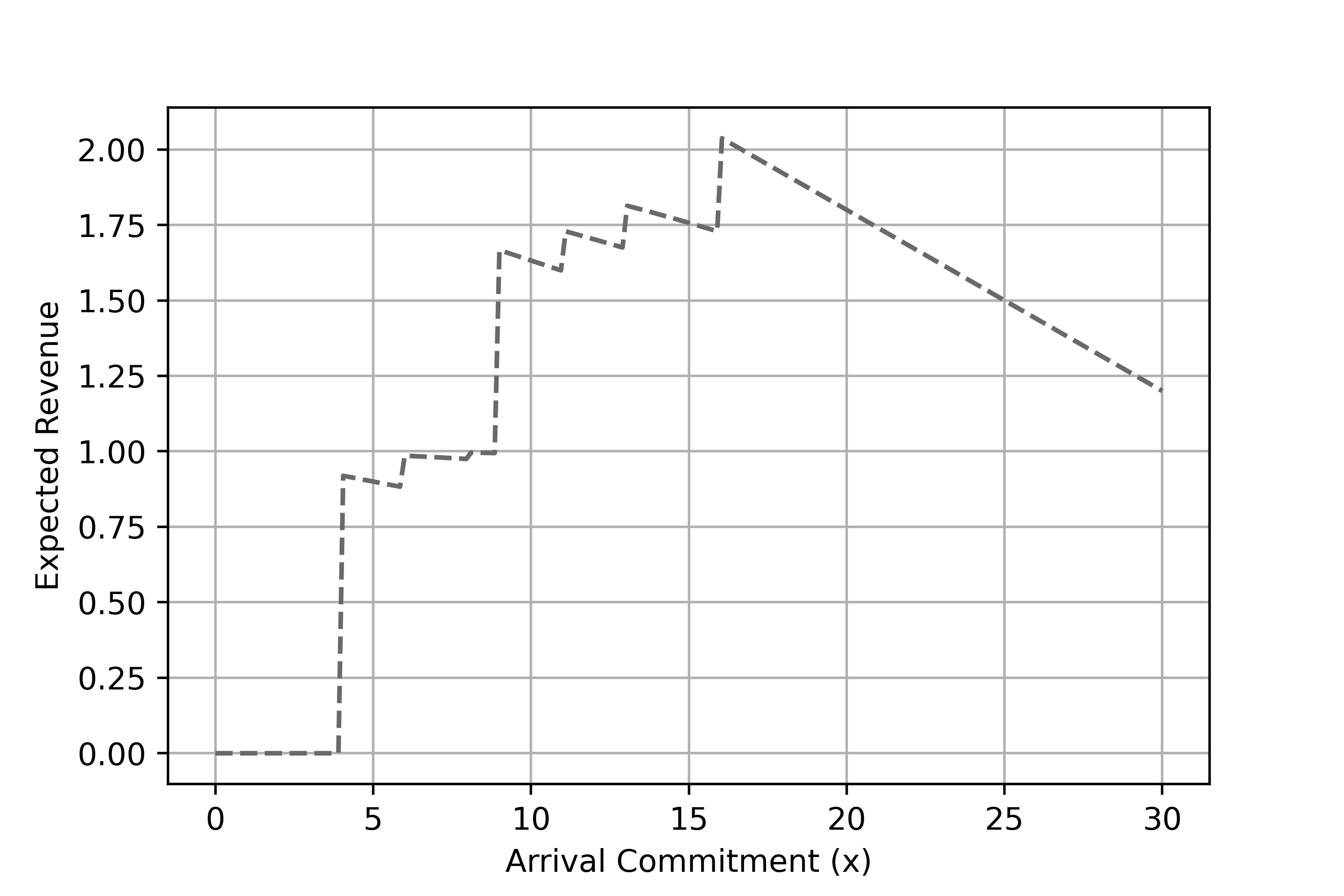}
		\caption{Expected revenue as a function of the arrival commitment $x$ for the example instance of Table~\ref{tab:subset_tourlengths}. The maximum is attained at $x = 16$.}
		\label{fig:ExpectedRevenue}
	\end{figure}
	
	\section{Structural Properties and Computational Complexity}
	\label{sec:structural}
	
	In this section, we present structural insights for the \ac{TSPSC} that are exploited in the subsequent algorithms. We begin by characterizing the expected revenue function in Section~\ref{sec:revenueFun}, and then show in Section~\ref{sec:complexity} that evaluating it for a given arrival commitment is $\textsf{\#P}$-hard.
	
	\subsection{Characterizing the Expected Revenue Function}
	\label{sec:revenueFun}
	We begin by characterizing the expected revenue as a function of the arrival commitment $x$. While the second-stage problem is a combinatorial optimization problem, the first-stage decision is a single continuous variable. We show that the set of candidate optimal solutions can be restricted to a finite set of discrete points, namely the tour lengths of the customer subsets.
	
	To make this precise, let $L_1 < L_2 < \cdots < L_K$ denote the elements of $\{\tau(S) \colon S \subseteq A\}$ in increasing order, and let $\mathcal{L} \coloneqq \{L_1, \ldots, L_K\}$, where $K \leq 2^n$. Since $L_1 = \tau(\emptyset) = 0$, setting $L_{K+1} \coloneqq \infty$ ensures that the intervals $[L_i, L_{i+1})$ for $i \in [K]$ partition $\mathbb{R}_+$. The following result shows that it suffices to evaluate the expected revenue at the tour lengths in $\mathcal{L}$.

	\begin{lemma} \label{lemma:discretisation_length}
		The supremum in \eqref{eq:1stageOpt} is attained at a tour length, i.e.,
		\[
			\sup_{x \in \mathbb{R}_+} \mathbb{E}_{F\mid x}\left[ \text{rev}(x, F) \right] = \max_{i \in [K]} \, \mathbb{E}_{F\mid L_i}\left[ \text{rev}(L_i, F) \right].
		\]
	\end{lemma}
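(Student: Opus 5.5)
The plan is to show that on each interval $[L_i, L_{i+1})$ the expected revenue is non-increasing in $x$, so that its supremum over that interval is attained at the left endpoint $L_i$. Since the intervals partition $\mathbb{R}_+$ and there are finitely many of them, the overall supremum is then a maximum over the finite set $\mathcal{L}$. The inequality ``$\geq$'' in the lemma is trivial because each $L_i \in \mathbb{R}_+$, so only ``$\leq$'' needs an argument.

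\textbf{Step 1: the second-stage revenue is constant on each interval.} Fix $i \in [K]$ and $x \in [L_i, L_{i+1})$. For any $S \subseteq A$, the value $\tau(S)$ lies in $\mathcal{L}$, and no element of $\mathcal{L}$ lies strictly between $L_i$ and $L_{i+1}$. Hence $\tau(S) \leq x$ holds if and only if $\tau(S) \leq L_i$. It follows that the feasible sets in \eqref{eq:2stageOpt} coincide for $x$ and for $L_i$, so $\text{rev}(x,F) = \text{rev}(L_i,F)$ for every $F \subseteq A$.

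\textbf{Step 2: the expected revenue is monotone in the acceptance probabilities.} With $g(F) \coloneqq \text{rev}(L_i,F)$ fixed, we have
\[
\mathbb{E}_{F\mid x}[\text{rev}(x,F)] = \sum_{F\subseteq A} g(F)\,\mathbb{P}(F\mid x).
\]
The function $g$ is monotone under inclusion: if $F \subseteq F'$, every feasible $S \subseteq F$ is also a feasible subset of $F'$, so $g(F) \leq g(F')$. The key claim is that, for a monotone set function $g$, the expectation $\sum_F g(F)\prod_{a\in F} q_a \prod_{a\notin F}(1-q_a)$ is coordinatewise non-decreasing in the probability vector $(q_a)_{a\in A}$. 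I would prove this with a coupling. Draw independent uniform variables $U_a$ on $[0,1]$ and set $F(q) = \{a : U_a \leq q_a\}$. If $q \leq q'$ coordinatewise, then $F(q) \subseteq F(q')$ pointwise, so $g(F(q)) \leq g(F(q'))$ pointwise, and taking expectations gives the claim. An alternative is to note that the expectation is multilinear and that its partial derivative in $q_a$ equals $\mathbb{E}[g(F\cup\{a\}) - g(F\setminus\{a\})] \geq 0$.

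\textbf{Step 3: combining the two steps.} Each $p_a$ is non-increasing, so $p_a(x) \leq p_a(L_i)$ for $x \geq L_i$. Steps 1 and 2 then give $\mathbb{E}_{F\mid x}[\text{rev}(x,F)] \leq \mathbb{E}_{F\mid L_i}[\text{rev}(L_i,F)]$ for every $x \in [L_i, L_{i+1})$. Taking the supremum over $x$ and the maximum over $i \in [K]$ yields ``$\leq$'', which completes the proof.

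The only step that is more than bookkeeping is the monotonicity claim in Step 2, and the coupling argument handles it directly.
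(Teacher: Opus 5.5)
Your proposal is correct and takes essentially the same route as the paper. Both arguments use constancy of $\text{rev}(\cdot,F)$ on each interval $[L_i,L_{i+1})$ and monotonicity of $\text{rev}$ under set inclusion, and both use the uniform coupling $F(x)=\{a : U_a \le p_a(x)\}$ to show that the expected revenue on each interval is maximized at its left endpoint $L_i$. The only difference is cosmetic: you compare each $x$ directly with $L_i$, whereas the paper proves monotonicity for arbitrary $x_1 \le x_2$ in the interval.
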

	\begin{proof}{Proof.}
		We first show that $\mathbb{E}_{F\mid x}\left[ \text{rev}(x, F) \right]$ is non-increasing on each interval $[L_i, L_{i+1})$, $i \in [K]$. Fix $i \in [K]$ and consider any $x_1, x_2 \in [L_i, L_{i+1})$ with $x_1 \leq x_2$. We observe that $\text{rev}(x_1, F) = \text{rev}(x_2, F)$ for all $F \subseteq A$: since $\mathcal{L}$ contains all tour lengths, no tour length lies in the open interval $(L_i, L_{i+1})$, so a subset $S \subseteq A$ satisfies $\tau(S) \leq x_1$ if and only if $\tau(S) \leq x_2$, both conditions being equivalent to $\tau(S) \leq L_i$. Hence the feasible sets in \eqref{eq:2stageOpt} coincide for $x_1$ and $x_2$, and we write $\text{rev}(F)$ for the common value. Moreover, $\text{rev}$ is monotone with respect to set inclusion: if $F \subseteq F'$, then every $S \subseteq F$ with $\tau(S) \leq L_i$ is also a feasible subset of $F'$, so $\text{rev}(F) \leq \text{rev}(F')$.

		Let $(U_a)_{a \in A}$ be independent random variables, each uniformly distributed on $[0,1]$, and define the random sets $F(x_j) \coloneqq \{a \in A \colon U_a \leq p_a(x_j)\}$ for $j \in \{1,2\}$. By independence,
		\begin{equation}\label{eq:help1}
			\mathbb{P}\left(F(x_j) = F\right) = \prod_{a \in F} p_a(x_j) \prod_{a \in A \setminus F} (1 - p_a(x_j)) = \mathbb{P}(F \mid x_j),
		\end{equation}
		for all \(F \subseteq A\),
		so $F(x_j)$ is distributed according to $\mathbb{P}(\,\cdot \mid x_j)$. Since each $p_a$ is non-increasing, $U_a \leq p_a(x_2)$ implies $U_a \leq p_a(x_1)$, and hence $F(x_2) \subseteq F(x_1)$ holds with probability one. Thus, we have $\text{rev}(F(x_2)) \leq \text{rev}(F(x_1))$ pointwise, and taking expectations gives
		\[
			\mathbb{E}_{F\mid x_2}\left[ \text{rev}(x_2, F) \right] = \mathbb{E}\left[ \text{rev}(F(x_2)) \right] \leq \mathbb{E}\left[ \text{rev}(F(x_1)) \right] = \mathbb{E}_{F\mid x_1}\left[ \text{rev}(x_1, F) \right].
		\]
		Hence, the expected revenue is non-increasing on $[L_i, L_{i+1})$ for every $i \in [K]$. Since these intervals partition $\mathbb{R}_+$, the supremum over each of them is attained at its left endpoint $L_i$, and the claim follows.
	\Halmos
\end{proof}
	A complementary result holds when the acceptance probabilities are piecewise constant. Such probability functions arise naturally when customers respond to displayed time buckets rather than to exact arrival times---for instance, when the commitment is communicated as ``within 15, 30, or 45 minutes''---so that the acceptance probability changes only when the commitment crosses a bucket boundary. Formally, suppose there exist $B \in \mathbb{Z}_+$ and breakpoints $0 = b_0 < b_1 < \cdots < b_B = \infty$ such that $p_a(x) = c_i$ for all $a \in A$ and $x \in [b_{i-1}, b_i)$, $i \in [B]$. Within each such interval, the acceptance probabilities are fixed, and only the set of feasible tours changes with $x$.
	
	\begin{lemma} \label{lemma:discretisation_prob}
		The supremum in \eqref{eq:1stageOpt} is attained at one of at most $B$ candidate commitments, i.e.,
		\[
			\sup_{x \in \mathbb{R}_+} \mathbb{E}_{F\mid x}\left[ \text{rev}(x, F) \right] = \max_{i \in [B]} \, \mathbb{E}_{F\mid \ell_i}\left[ \text{rev}(\ell_i, F) \right],
		\]
		 where \(\ell_i \coloneqq \max\bigl(\left(\mathcal{L} \cap [b_{i-1}, b_i)\right) \cup \{b_{i-1}\}\bigr).\)
	\end{lemma}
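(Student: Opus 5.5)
Fix a bucket $[b_{i-1}, b_i)$, $i \in [B]$. Inside it every customer accepts with the same probability $c_i$, so the distribution $\mathbb{P}(\cdot \mid x)$ of the accepting set $F$ does not depend on $x$. The only thing that varies with $x$ in this bucket is the feasible region of the second-stage problem \eqref{eq:2stageOpt}. The plan is therefore to prove two things: the expected revenue is non-decreasing in $x$ on each bucket, and it is constant between $\ell_i$ and $b_i$. The supremum over the bucket is then attained at $\ell_i$. Taking the maximum over $i \in [B]$ gives the claim, because the buckets partition $\mathbb{R}_+$.

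\emph{Monotonicity.} For $x_1 \le x_2$ in $[b_{i-1}, b_i)$ and any fixed $F \subseteq A$, every $S \subseteq F$ with $\tau(S) \le x_1$ also satisfies $\tau(S) \le x_2$. Hence $\text{rev}(x_1,F) \le \text{rev}(x_2,F)$. Since the weights $\mathbb{P}(F \mid x_1) = \mathbb{P}(F \mid x_2)$ coincide, summing over $F$ gives $\mathbb{E}_{F\mid x_1}[\text{rev}(x_1,F)] \le \mathbb{E}_{F\mid x_2}[\text{rev}(x_2,F)]$. No coupling argument is needed here, unlike in Lemma~\ref{lemma:discretisation_length}.

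\emph{Constancy on $[\ell_i, b_i)$.} By definition, $\ell_i$ is either the largest tour length in $[b_{i-1}, b_i)$, or $b_{i-1}$ if the bucket contains no tour length. In both cases $\ell_i \in [b_{i-1}, b_i)$, and no $\tau(S)$ lies in the interval $(\ell_i, b_i)$. So for every $x \in [\ell_i, b_i)$ and every $S$, the condition $\tau(S) \le x$ is equivalent to $\tau(S) \le \ell_i$. This is the same argument as the first step of Lemma~\ref{lemma:discretisation_length}. It follows that $\text{rev}(x,F) = \text{rev}(\ell_i,F)$ for all $F$, and with equal probabilities the expectations coincide. Combined with monotonicity on $[b_{i-1}, \ell_i]$, this shows the supremum over the bucket equals the value at $\ell_i$.

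The main obstacle is bookkeeping at the bucket boundaries. Because $b_i$ itself belongs to the next bucket, where the probability is different, the supremum over $[b_{i-1}, b_i)$ must be shown to be attained and not merely approached. This is exactly what the constancy step provides: on $[\ell_i, b_i)$ the function is constant rather than increasing toward an open endpoint. The case $b_B = \infty$ fits the same argument, since $\mathcal{L}$ is finite, so $\ell_B$ is well defined. Degenerate cases, such as an empty intersection $\mathcal{L} \cap [b_{i-1}, b_i)$, are handled by the $\{b_{i-1}\}$ term in the definition of $\ell_i$.
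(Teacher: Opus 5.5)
Your proposal is correct and follows the paper's proof essentially step for step. Both arguments use fixed acceptance probabilities within each bucket together with monotonicity of $\text{rev}(\cdot,F)$ in $x$ to show the expected revenue is non-decreasing on $[b_{i-1},b_i)$, and then show it is constant on $[\ell_i,b_i)$ because no tour length lies in $(\ell_i,b_i)$, so each bucket's supremum is attained at $\ell_i$. Your remark on why constancy is needed for attainment at the open endpoint $b_i$ makes explicit what the paper states in a single line.
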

	\begin{proof}{Proof.}
		We first show that $\mathbb{E}_{F\mid x}\left[ \text{rev}(x, F) \right]$ is non-decreasing on each interval $[b_{i-1}, b_i)$, $i \in [B]$. Let $x_1, x_2 \in [b_{i-1}, b_i)$ with $x_1 \leq x_2$. Since $p_a(x) = c_i$ for all $a \in A$ throughout the interval, we have $\mathbb{P}(F \mid x_1) = \mathbb{P}(F \mid x_2) = c_i^{|F|}(1-c_i)^{n-|F|}$ for all $F \subseteq A$. Furthermore, $x_1 \leq x_2$ implies $\text{rev}(x_1, F) \leq \text{rev}(x_2, F)$. Therefore,
		\begin{align*}
			\mathbb{E}_{F\mid x_1}\left[ \text{rev}(x_1, F) \right] - \mathbb{E}_{F\mid x_2}\left[\text{rev}(x_2, F) \right] 
			= \sum_{F \subseteq A} c_i^{|F|}(1-c_i)^{n-|F|} \left(\text{rev}(x_1, F) - \text{rev}(x_2, F)\right)
			\leq 0.
		\end{align*}
		Hence, the expected revenue is non-decreasing on $[b_{i-1}, b_i)$ for every $i \in [B]$. Moreover, since $\text{rev}(\cdot, F)$ changes only when $x$ crosses a tour length, the expected revenue is constant on $[\ell_i, b_i)$, so its supremum over $[b_{i-1}, b_i)$ is attained at $\ell_i$. Since the intervals $[b_{i-1}, b_i)$, $i \in [B]$, partition $\mathbb{R}_+$, the claim follows.
	\Halmos
\end{proof}

	In general, the expected revenue function is not unimodal, so the global maximizer cannot be identified by local search over the candidates in $\mathcal{L}$; this motivates evaluating all candidates in Section~\ref{sec:algorithms}. Moreover, the number of local maxima can grow with the number of customers. Consider $n = 7$ customers on a line with unit revenues and acceptance probability $p_a(x) = \max(0,\, 1 - x/50)$: three customers at distance $1.5$ from the depot, one at distance $8$, and three at distance $15$. Any subset of customers is served optimally by driving to the farthest requested location and back, so the candidate set is $\mathcal{L} = \{0, 3, 16, 30\}$, and for $x$ between consecutive candidates the expected revenue equals $p(x)$ times the number of customers within reach. As Figure~\ref{fig:multi_local_opt} shows, the function jumps upward at each tour length and declines in between, making every tour length a local maximum: it reaches $3\,p(3) = 2.82$ at $x = 3$, $4\,p(16) = 2.72$ at $x = 16$, and $7\,p(30) = 2.80$ at $x = 30$. Each peak reflects the tension between the additional revenue from newly reachable customers and the declining acceptance probabilities; appending further groups of customers at larger distances yields instances with arbitrarily many local maxima.
	
	\begin{figure}[htbp]
		\centering
		\includegraphics[width=0.6\textwidth]{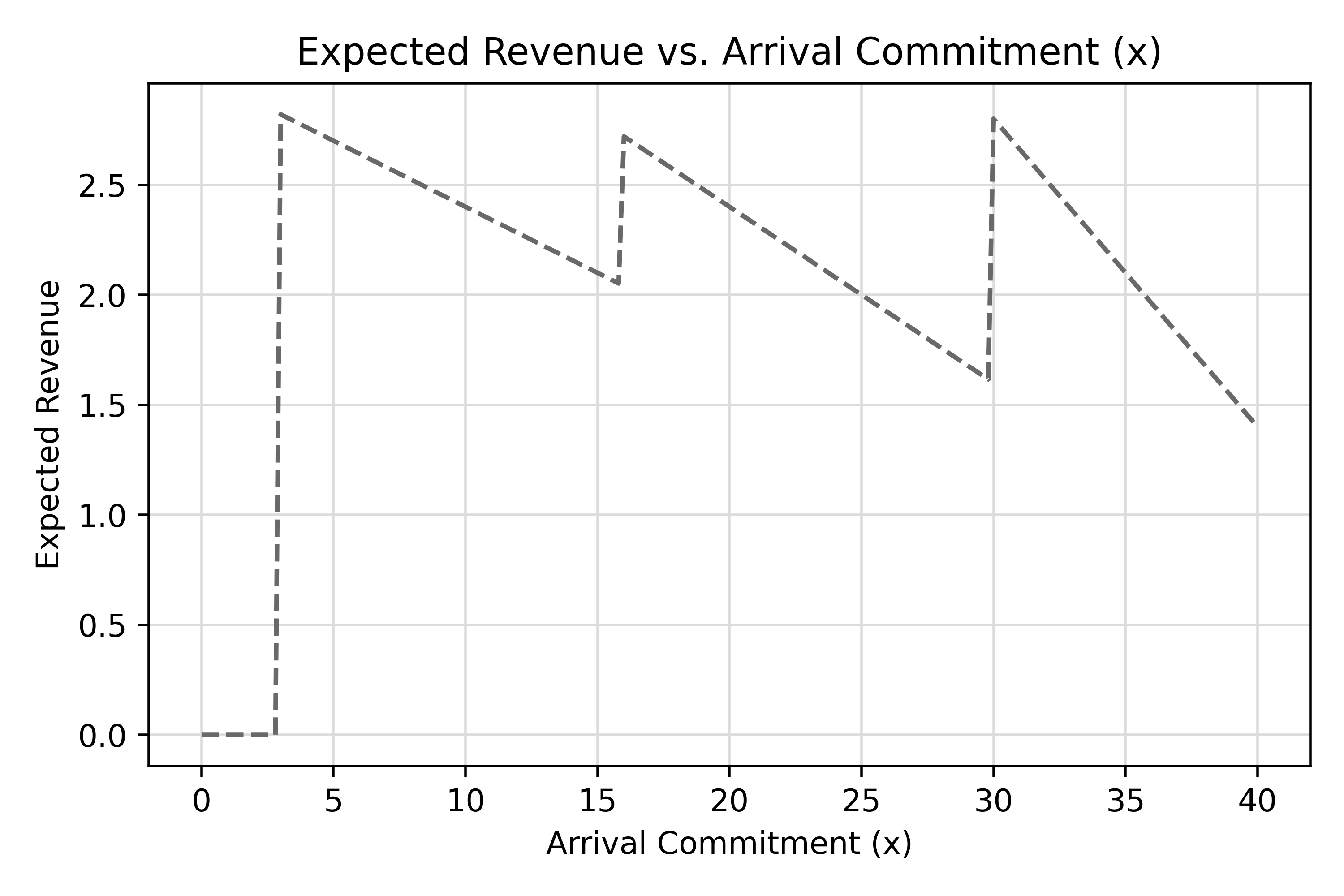}
		\caption{Expected revenue as a function of the arrival commitment $x$ for an instance with seven unit-revenue customers on a line (three at distance 1.5, one at distance 8, three at distance 15 from the depot) and $p_a(x) = \max(0,\, 1 - x/50)$. Each tour-length candidate is a local maximum.}
		\label{fig:multi_local_opt}
	\end{figure}

	\subsection{Hardness of Evaluating the Expected Revenue}\label{sec:complexity}
	In this subsection, we establish a complexity lower bound for evaluating the objective function of the first-stage problem. Specifically, we show that computing the expected revenue for a given arrival commitment is $\textsf{\#P}$-hard. We emphasize that this result applies exclusively to the evaluation of the expected revenue: the reduction relies on customer-specific revenues and acceptance probabilities as well as on travel times that violate the triangle inequality. Whether computing the expected revenue remains $\textsf{\#P}$-hard for metric travel times and homogeneous customers remains an open problem, as does the complexity of the first-stage problem.

	\begin{theorem} \label{thm:complexity}
		The problem of computing the expected revenue given an arrival commitment $x$ is $\textsf{\#P}$-hard for general edge weights.
	\end{theorem}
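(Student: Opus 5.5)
We reduce from a \textsf{\#P}-complete counting problem whose structure matches the second-stage problem with customer-specific revenues and probabilities: counting the feasible solutions of a knapsack instance (\#Knapsack), i.e., given positive integers $w_1,\dots,w_n$ and capacity $W$, count the subsets $T\subseteq[n]$ with $\sum_{j\in T} w_j \le W$. The idea is to build a star-like graph whose shortest tour lengths are additive in the chosen customers. We then choose revenues and probabilities so that the expected revenue, viewed as a function of a free parameter, is a polynomial whose coefficients encode the knapsack counts. Since the reduction may violate the triangle inequality, we can set edge weights freely.

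\emph{Construction.} Take customers $a_1,\dots,a_n$ with $d(s,a_j)=w_j$ and $d(a_i,a_j)=w_i+w_j$ for $i\ne j$. Every tour then has length $2\sum_{j\in S} w_j$ for the visited set $S$, so $\tau(S)=2w(S)$, and we set $x=2W$. A subset is feasible exactly when it is a knapsack solution. The difficulty is that $\text{rev}(x,F)$ is a maximum, not a count. To make feasibility of the whole set $F$ the decisive quantity, we add one extra customer $a_0$ with $d(s,a_0)=0$ and zero-cost connections, acceptance probability $1$, and a huge revenue $M$, and give the original customers revenues $r_j=w_j$ (or tiny $\varepsilon w_j$). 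This alone still leaves a max-weight knapsack value. The cleaner route is different: use revenues $r_j = 1$ and extract the number of feasible sets via polynomial interpolation. Taking a common probability $p$ for all $a_j$ makes $E(p)=\sum_k p^k(1-p)^{n-k} \sum_{|F|=k} \text{rev}(F)$, a polynomial of degree $n$. Evaluating it at $n+1$ distinct values of $p$, each an oracle call with a constant $p_a$, recovers $R_k=\sum_{|F|=k}\text{rev}(F)$. The step I expect to be the main obstacle is that $R_k$ aggregates maxima rather than counting feasible $F$.

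\emph{Handling the obstacle.} Instead of uniform revenues I would use the probabilities to isolate the event that $F$ itself is feasible. For each $j$ set $r_j$ so that revenue is superadditive enough to force the optimal served set to be all of $F$ whenever feasible. A convenient alternative is to use differences: compare the oracle value at budget $x=2W$ with instances where one customer's weight is altered, or use the identity
\[
\#\{F: w(F)\le W\} \;=\; \sum_F \mathbf{1}[\text{rev}(2W,F)=r(F)].
\]
To force the indicator, choose revenues $r_j=w_j$ together with a fresh customer $a_{n+1}$ of weight $w_{n+1}=W$ (tour cost $2W$), revenue $W+\tfrac12$, and probability $q$. If $a_{n+1}$ accepts, serving it alone gives $W+\tfrac12$, which beats any knapsack set. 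Hence the expected revenue is $qC+(1-q)\,\mathbb{E}[\text{knap}(F)]$, and this only shifts the problem. I therefore expect the final clean argument to go through counting maxima directly: \#P-hardness of computing $\mathbb{E}[\max\{w(S):S\subseteq F, w(S)\le W\}]$ under random $F$. I would obtain it by choosing probabilities $p_j$ and a polynomial-in-parameter interpolation over $F$ with integer weights, extracting $\Pr[\text{knap}(F)\ge t]$ for each threshold $t$. That quantity equals the probability that the random set contains a subset with weight in $[t,W]$, and with $t=W$ it becomes counting subset-sum-type events. Its \#P-hardness follows from \#SubsetSum via inclusion of the whole ground set with $p_j=1/2$. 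Making this threshold extraction rigorous, by varying $W$ over polynomially many budgets and taking successive differences of the oracle answers, is the key technical step I would carry out last.
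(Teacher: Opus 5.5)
Your proposal does not yet contain a reduction. Each route you try is abandoned, and the one you finally commit to defers exactly the step that carries the difficulty, resting on two claims that do not hold as stated.

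First, differencing oracle answers over the budget does not isolate tail probabilities. For integer weights,
$\text{knap}_W(F)-\text{knap}_{W-1}(F)=\bigl(W-\text{knap}_{W-1}(F)\bigr)\cdot\mathbf{1}[\exists S\subseteq F\colon w(S)=W]$,
which is not an indicator. Moreover, an expectation at a fixed budget gives only the sum $\sum_{t}\Pr[\text{knap}(F)\ge t]$ of the tails, not an individual one.

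Second, the quantity you land on at $t=W$ is the number of sets $F$ that contain some subset of weight exactly $W$. That counts the up-closure of the subset-sum solutions, not the solutions themselves, so setting $p_j=\tfrac12$ does not reduce \#SubsetSum to it. The max can in fact be turned into that indicator in your star instance. Take $r_j=w_j$ and add a customer $a_0$ at distance $1$ from the depot, with revenue $1$ and constant probability $q\in\{0,1\}$. Then $\text{rev}(2W,F\cup\{a_0\})-\text{rev}(2W,F)=1-\mathbf{1}[\exists S\subseteq F\colon w(S)=W]$. But you would still need a separate proof that counting such $F$ is \textsf{\#P}-hard.

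The missing idea is a device that collapses the second-stage maximum to a $0/1$ indicator of an event whose count is known to be \textsf{\#P}-hard. The paper does this by putting all revenue on a single target customer $t$ and using non-metric weights: $0$ on the edges of a graph $G'$ and on $\{s,t'\},\{t,t'\}$, and $n+1$ elsewhere. Accepted zero-revenue customers then act as free connectors. With budget $x=n$, $\text{rev}(n,F)=1$ iff $t\in F$ and $F$ contains the internal vertices of an $s$--$t$ path in $G'$. A probability-one customer $t'$ closes the tour back to $s$. With $p_a=\tfrac12$ for the other customers, the expected revenue times $2^{n'}$ equals Valiant's \textsf{\#P}-hard $s$--$t$ node-connectedness count.

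This trick is structurally unavailable in your construction. The star distances $d(s,a_j)=w_j$, $d(a_i,a_j)=w_i+w_j$ are metric, so $\tau$ is monotone under inclusion and zero-revenue customers can never help. All hardness must then be extracted from the maximum itself, which is precisely the obstacle you identified but did not resolve.
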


	\begin{proof}{Proof.}
	We show the claim by a reduction from the {\sc s-t Node Connectedness} problem, which was shown to be $\textsf{\#P}$-hard in the seminal work of \cite{valiant1979complexity}; a similar reduction was used by \citet{kamousi2011stochastic} to establish $\textsf{\#P}$-hardness of computing the expected minimum spanning tree length in stochastic networks. In this problem, we are given a graph $G'=(V',E')$ and a pair of vertices $s, t \in V'$, and the goal is to compute the number of subsets $W \subseteq V' \setminus \{s,t\}$ for which $G'$ contains a path between $s$ and $t$ whose internal vertices all belong to $W$.

	We construct an instance of our problem as follows. Let $n' \coloneqq |V'| - 1$. Introduce an auxiliary vertex $t'$ and set $V = V' \cup \{t'\}$, with $s$ as the depot and $A = V \setminus \{s\}$ as the set of customers, so that the constructed instance has $n = n' + 1$ customers. Define the edge set $\hat{E} = E' \cup \{\{s,t'\}, \{t,t'\}\}$. The travel times on the complete graph $G = (V, E)$ are defined by $d(e) = 0$ if $e \in \hat{E}$ and $d(e) = n+1$ otherwise; note that $d$ need not satisfy the triangle inequality. Set the acceptance probability of customer $t'$ to $p_{t'} \coloneqq 1$ and $p_a \coloneqq \frac{1}{2}$ for every other customer $a$, so that $t'$ accepts with probability one. Set the revenue of customer $t$ to $r_t = 1$ and the revenue of all other customers to $0$. We claim that evaluating the expected revenue of this instance at the arrival commitment $x = n$ solves the counting problem.

	With this construction, a tour of length at most $n$ can only use edges of $\hat{E}$; recall that a tour visits each served customer exactly once. Since $t$ is the only customer with positive revenue, $\text{rev}(n, F) = 1$ if some tour of length at most $n$ serves $t$, and $\text{rev}(n, F) = 0$ otherwise; in particular, $\text{rev}(n, F) = 0$ whenever $t \notin F$. Consider a realization $F$ with $\{t, t'\} \subseteq F$. If $G'$ contains a path $s = v_0, v_1, \ldots, v_k = t$ whose internal vertices $v_1, \ldots, v_{k-1}$ lie in $F$, then the tour $s \to v_1 \to \cdots \to t \to t' \to s$ serves the customers $\{v_1, \ldots, v_k, t'\} \subseteq F$, uses only edges in $\hat{E}$, and has length $0$, so $\text{rev}(n, F) = 1$. Conversely, suppose some tour of length at most $n$ serves $t$. If the tour visits $t'$, then, since the only edges of $\hat{E}$ incident to $t'$ are $\{s,t'\}$ and $\{t,t'\}$, the neighbors of $t'$ on the tour are $s$ and $t$, and removing $t'$ from the tour yields a path between $s$ and $t$ in $G'$ whose internal vertices lie in $F$; if the tour does not visit $t'$, the tour itself contains such a path. In both cases, $G'$ contains a path between $s$ and $t$ whose internal vertices lie in $F$. Hence, $\text{rev}(n, F) = 1$ if and only if $F = W \cup \{t, t'\}$ for some $W \subseteq V' \setminus \{s,t\}$ such that $G'$ contains a path between $s$ and $t$ whose internal vertices all belong to $W$.

	Since $t'$ accepts with probability one, exactly the realizations $F = W' \cup \{t'\}$ with $W' \subseteq V' \setminus \{s\}$ have positive probability, each occurring with probability $(\frac{1}{2})^{n'}$. We therefore obtain
	\begin{align*}
		2^{n'}\,\mathbb{E}_{F\mid x=n}\left[ \text{rev}(n, F )\right]
		&= \sum_{F \subseteq A \colon t' \in F} \text{rev}(n, F)\\
		&= \left|\left\{W \subseteq V' \setminus \{s,t\} \colon G' \text{ contains an $s$--$t$ path with all internal vertices in } W\right\}\right|.
	\end{align*}
	Thus, a single evaluation of the expected revenue at $x = n$, multiplied by $2^{n'}$, yields the answer to the counting problem, and computing the expected revenue is $\textsf{\#P}$-hard.
	\Halmos
\end{proof}
	We close this section with a remark that puts Theorem~\ref{thm:complexity} into perspective. The hardness concerns the exact evaluation of the expected revenue; approximating it is considerably easier, as sampling customer realizations yields an unbiased estimator of the expected revenue for any fixed commitment. The sampling-based heuristic developed in Section~\ref{sec:heuristic} builds on precisely this observation.

    \section{Exact and Heuristic Solution Algorithms}
    \label{sec:algorithms}
    In this section, we develop three algorithms for the \ac{TSPSC}. The first is a general brute-force approach that enumerates all candidate solutions independently, resulting in a runtime of $O(n\,2^{2n})$ (Section~\ref{sec:bruteForce}). This algorithm remains valid even if the triangle inequality does not hold and customers are heterogeneous, i.e., $r_a$ and $p_a$ may vary across customers. The second algorithm precomputes shared information across candidates, reducing the runtime to $O(n^2 2^n)$ (Section~\ref{sec:integrated}). Since even this reduced runtime remains intractable for larger instances, we develop a sampling-based heuristic that likewise relies on these two conditions, identifying near-optimal commitments efficiently (Section~\ref{sec:heuristic}). The second algorithm and the heuristic both require homogeneous customers and exploit the triangle inequality.
	
	\subsection{Brute-force Algorithm}\label{sec:bruteForce}
	
	By Lemma~\ref{lemma:discretisation_length}, the optimal expected revenue is attained at some tour length in $\mathcal{L}$. The brute-force approach computes $\tau(S)$ for every subset $S \subseteq A$, then evaluates the expected revenue at each candidate point and returns the maximum. 
	The algorithm is summarized in Algorithm~\ref{alg:bruteForce} and proceeds in two phases. Phase~1 (lines~\ref{line:bf:hk_start}--\ref{line:bf:hk_end}) computes the shortest tour length $\tau(F)$ for every subset $F \subseteq A$ using the seminal Held--Karp dynamic program \citep{held1962dynamic}, and collects the distinct tour lengths in the candidate set $\mathcal{L}$. Phase~2 (lines~\ref{line:bf:eval_start}--\ref{line:bf:eval_end}) iterates over each candidate $x \in \mathcal{L}$. For each candidate, the second-stage revenue $\text{rev}(x, F)$ is computed for all subsets via a dynamic program over subsets of increasing cardinality (lines~\ref{line:bf:dp_start}--\ref{line:bf:dp_end}): if $\tau(F) \leq x$, all customers in $F$ can be served; otherwise, the best achievable revenue is inherited from a subset of $F$ with one customer removed. The expected revenue is then obtained by weighting each $\text{rev}(x, F)$ by its probability (line~\ref{line:bf:exp_rev}), and the algorithm returns the candidate with the highest expected revenue.
	
	\begin{algorithm}[htbp]
		\SingleSpacedXII
		\caption{Brute-force algorithm for the \ac{TSPSC}}
		\label{alg:bruteForce}
		\begin{algorithmic}[1]
			\State \textbf{Input:} Graph $G = (V, E)$, probability functions $p_a$, revenues $r_a$
			\State \textbf{Output:} Optimal arrival commitment $x^*$ and expected revenue $R^*$
			\Statex
			\Comment{Phase 1: Compute all tour lengths via Held--Karp}
			\State $C(\{i\}, i) \gets d(s, i)$ for all $i \in A$ \label{line:bf:hk_start} \Comment{$C(S,i)$: shortest path from $s$ visiting all of $S$, ending at $i$}
			\For{$S \subseteq A$ with $|S| \geq 2$}
			\State $C(S, i) \gets \min_{j \in S \setminus \{i\}} \{ C(S \setminus \{i\}, j) + d(j, i) \}$ for all $i \in S$
			\EndFor
			\State $\tau(\emptyset) \gets 0$; $\tau(F) \gets \min_{i \in F} \{C(F, i) + d(i, s)\}$ for all nonempty $F \subseteq A$ \Comment{return to the depot}
			\State $\mathcal{L} \gets \{\tau(F) : F \subseteq A\}$ \label{line:bf:hk_end} \Comment{candidate commitments (Lemma~\ref{lemma:discretisation_length})}
			\Statex
			\Comment{Phase 2: Evaluate expected revenue at each candidate}
			\State $R^* \gets 0$ \label{line:bf:eval_start}
			\For{$x \in \mathcal{L}$}
			\State $\text{rev}(x, \emptyset) \gets 0$ \label{line:bf:dp_start}
			\For{nonempty $F \subseteq A$ in order of increasing $|F|$}
			\If{$\tau(F) \leq x$}
			\State $\text{rev}(x, F) \gets \sum_{a \in F} r_a$ \Comment{all accepting customers can be served}
			\Else
			\State $\text{rev}(x, F) \gets \max_{i \in F} \text{rev}(x, F \setminus \{i\})$ \Comment{omit one customer}
			\EndIf
			\EndFor \label{line:bf:dp_end}
			\State $R(x) \gets \sum_{F \subseteq A} \text{rev}(x, F) \cdot \mathbb{P}(F \mid x)$ \label{line:bf:exp_rev} \Comment{expected revenue at candidate $x$}
			\If{$R(x) > R^*$}
			\State $R^* \gets R(x)$
			\State $x^* \gets x$
			\EndIf
			\EndFor \label{line:bf:eval_end}
			\Statex
			\State \Return $x^*$, $R^*$
		\end{algorithmic}
	\end{algorithm}
	
	We now establish correctness and runtime.
	
	\begin{lemma}\label{lemma:naiveComputationLengths}
		Phase~1 computes $\tau(F)$ for all $F \subseteq A$ in $O(n^2 2^n)$ time and $O(n 2^n)$ space.
	\end{lemma}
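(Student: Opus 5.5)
The proof has two parts: correctness of the Held--Karp recursion and a count of states and work. For correctness I will show, by induction on $|S|$, that $C(S,i)$ equals the length of a shortest path that starts at $s$, visits exactly the customers in $S$ (each once), and ends at $i \in S$.

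The base case $|S|=1$ is the initialization $C(\{i\},i)=d(s,i)$. For $|S|\ge 2$, I will argue through the customer $j$ visited immediately before $i$ on such a path. The prefix from $s$ to $j$ is a path visiting exactly $S\setminus\{i\}$ and ending at $j$. By the induction hypothesis its length is at least $C(S\setminus\{i\},j)$, which gives the $\geq$ direction. Conversely, appending the edge $(j,i)$ to an optimal path for $(S\setminus\{i\},j)$ gives a feasible path for $(S,i)$, which gives $\leq$. The induction is valid because the loop processes subsets so that $S\setminus\{i\}$ is handled before $S$; I will make this explicit by enumerating subsets in order of increasing cardinality, or by increasing bitmask value. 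Finally, every tour through a nonempty $F$ has some last customer $i$ before returning to $s$. Hence $\tau(F)=\min_{i\in F}\{C(F,i)+d(i,s)\}$, and $\tau(\emptyset)=0$ by definition. Note that the triangle inequality is not needed here, since the recursion is exact for tours that visit each customer exactly once.

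For the runtime, there are at most $n2^n$ states $(S,i)$ with $i\in S$. Each state takes a minimum over at most $n-1$ predecessors $j$, which costs $O(n)$, so the recursion takes $O(n^2 2^n)$ time in total. Computing $\tau(F)$ costs $O(n)$ per subset, for $O(n2^n)$ overall. Forming $\mathcal{L}$ means collecting $2^n$ values. Deduplicating them by sorting would cost $O(2^n n)$, which is dominated; alternatively, $\mathcal{L}$ can be kept as a multiset or hashed. For space, the table $C$ has $O(n2^n)$ entries and $\tau$ has $2^n$ entries, giving $O(n2^n)$.

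The only point needing care is the evaluation order and the indexing of subsets. I would store subsets as bitmasks, so that $S\setminus\{i\}$ is computed in $O(1)$ time and table lookups cost $O(1)$. The bitmask $S\setminus\{i\}$ is numerically smaller than $S$, so iterating over masks in increasing value respects the dependencies.
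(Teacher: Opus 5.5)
Your proof is correct and follows the paper's approach: bottom-up evaluation of the Held--Karp table, with $O(n2^n)$ states at $O(n)$ work each, $O(n)$ per subset to extract $\tau(F)$, and $O(n2^n)$ space for $C$. You additionally give the inductive correctness argument for the recursion and account for collecting and deduplicating $\mathcal{L}$, both of which the paper's proof leaves implicit.
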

	\begin{proof}{Proof.}
		The values $C(S, i)$ are computed bottom-up by increasing $|S|$ and stored 
		in a table, so each state is computed exactly once.
		The dynamic program has $O(n 2^n)$ states $(S, i)$, and each state requires $O(n)$ time to evaluate the minimization over $j \in S \setminus \{i\}$. Computing $\tau(F)$ from the table $C$ requires an additional $O(n)$ per subset. The total runtime is $O(n^2 2^n)$ and the space for the table $C$ is $O(n 2^n)$.
	\Halmos
\end{proof}
	
	\begin{lemma}\label{lemma:naiveExpectedRev}
		For a given $x \in \mathbb{R}_+$, Phase~2 computes $\mathbb{E}_{F\mid x}[\text{rev}(x, F)]$ in $O(n 2^n)$ time and  $O(2^n)$ space.
	\end{lemma}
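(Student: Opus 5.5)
The plan is to establish correctness of the recursion in lines~\ref{line:bf:dp_start}--\ref{line:bf:dp_end} first, and then count operations and storage. Phase~1 has already produced $\tau(F)$ for all $F$ (Lemma~\ref{lemma:naiveComputationLengths}), and we treat these values as read-only input.

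\textbf{Correctness.} We argue by induction on $|F|$ that the table entry equals $\text{rev}(x,F)$ as defined in \eqref{eq:2stageOpt}. The base case $F=\emptyset$ holds because $\tau(\emptyset)=0\le x$ and the empty sum is $0$.

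For the inductive step there are two cases.
\begin{itemize}
\item If $\tau(F)\le x$, then $S=F$ is feasible. Since $r_a\ge 0$, the sum of revenues is monotone in $S$, so $F$ is optimal and $\text{rev}(x,F)=\sum_{a\in F}r_a$.
\item If $\tau(F)>x$, then any feasible $S\subseteq F$ is a proper subset. Hence $S\subseteq F\setminus\{i\}$ for some $i\in F$, which gives $\text{rev}(x,F)\le\max_i \text{rev}(x,F\setminus\{i\})$. Conversely, every feasible subset of $F\setminus\{i\}$ is also a feasible subset of $F$, which gives the reverse inequality.
\end{itemize}
Processing subsets by increasing cardinality guarantees that each $\text{rev}(x,F\setminus\{i\})$ is already available when $F$ is treated. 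The subsequent sum in line~\ref{line:bf:exp_rev} is then exactly the definition of $\mathbb{E}_{F\mid x}[\text{rev}(x,F)]$.

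\textbf{Runtime.} We encode subsets as bitmasks. For each of the $2^n$ subsets, the algorithm does one $O(1)$ lookup of $\tau(F)$. It then does either a revenue sum over at most $n$ elements or a maximum over at most $n$ predecessors, each lookup taking $O(1)$ via bit clearing, so the DP costs $O(n2^n)$.

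For the weighted sum, we evaluate $p_a(x)$ once per customer in $O(n)$. Computing $\mathbb{P}(F\mid x)$ directly takes $O(n)$ per subset, which again totals $O(n2^n)$. Alternatively, the probabilities can be built incrementally as $\mathbb{P}(F\mid x)=\mathbb{P}(F\setminus\{i\}\mid x)\cdot p_i/(1-p_i)$, but that needs care when $p_i=1$; the direct product avoids this and stays within the bound. Iterating subsets in order of increasing cardinality can be done in $O(2^n)$ total overhead, for example by bucketing masks by popcount or simply iterating masks in increasing numeric order, since $F\setminus\{i\}<F$ numerically.

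\textbf{Space.} The table $\text{rev}(x,\cdot)$ has $2^n$ entries, plus $O(n)$ for the probabilities. The $\tau$ table is input from Phase~1 and is not counted. If probabilities are computed on the fly and accumulated into a running sum rather than stored, the total space is $O(2^n)$.

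\textbf{Main obstacle.} The only delicate point is the recursion's correctness in the infeasible case, specifically the claim that restricting to single-element deletions loses nothing. This follows from the observation above that any feasible proper subset lies within some $F\setminus\{i\}$. Beyond that, the only care needed is to keep the ordering and probability computation within the stated bounds.
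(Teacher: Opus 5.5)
Your proposal is correct and follows the same route as the paper: a memoized DP over subsets that makes each $F\setminus\{i\}$ available before $F$ and spends $O(|F|)$ per subset, giving $\sum_{F\subseteq A}|F| = O(n2^n)$ time and a $2^n$-entry table. You go somewhat beyond the paper by proving the recurrence correct by induction (using $S \subseteq F\setminus\{i\}$ in the infeasible case) and by explicitly charging $O(n)$ per probability weight $\mathbb{P}(F\mid x)$, a cost the paper's ``single pass'' leaves implicit but which still fits within the $O(n2^n)$ bound.
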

	\begin{proof}{Proof.}
		The recurrence processes each $F \subseteq A$ in order of increasing $|F|$. 
		For each $F$, either $\tau(F) \leq x$ and the revenue is immediate, or we take the maximum over $|F|$ subproblems, each already computed.
		Since the subsets are processed in order of increasing cardinality and the 
		values $\text{rev}(x, F)$ are stored in a table, each subproblem is computed 
		exactly once.
		The total number of operations is $\sum_{F \subseteq A} |F| = O(n 2^n)$. The expected revenue is then obtained by a single pass over all $2^n$ subsets, requiring $O(2^n)$ space.
	\Halmos
\end{proof}
	
	\begin{theorem}\label{thm:opt1}
		Algorithm~\ref{alg:bruteForce} computes an optimal solution to the \ac{TSPSC} in $O(n\,2^{2n})$ time and $O(n 2^n)$ space.
	\end{theorem}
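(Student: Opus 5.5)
The proof splits into correctness and complexity, and both parts mostly assemble results already established. For correctness, I will first invoke Lemma~\ref{lemma:discretisation_length}: the supremum in \eqref{eq:1stageOpt} equals the maximum of the expected revenue over the finite candidate set $\mathcal{L}$. Phase~1 produces exactly this set. The only points to note are that the Held--Karp table $C(S,i)$ correctly gives shortest $s$-rooted paths through $S$ ending at $i$, which is standard, and that $\tau(\emptyset)=0$ is added explicitly. Since Phase~1 does not use the triangle inequality, the argument holds for general travel times.

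Next, I will verify that Phase~2 correctly computes $\text{rev}(x,F)$ for every $x\in\mathcal{L}$ by induction on $|F|$.
\begin{itemize}
\item If $\tau(F)\le x$, then $S=F$ is feasible in \eqref{eq:2stageOpt}. Because $r_a\ge 0$, no subset of $F$ has larger revenue, so $\text{rev}(x,F)=\sum_{a\in F} r_a$.
\item If $\tau(F)>x$, every feasible $S\subseteq F$ is a proper subset, so it lies in $F\setminus\{i\}$ for some $i\in F$. Conversely, every feasible subset of $F\setminus\{i\}$ is feasible for $F$. Hence $\text{rev}(x,F)=\max_{i\in F}\text{rev}(x,F\setminus\{i\})$, and the right-hand side is correct by the induction hypothesis.
\end{itemize}
Line~\ref{line:bf:exp_rev} is then exactly the definition of the expectation under $\mathbb{P}(\cdot\mid x)$. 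Taking the best candidate therefore returns an optimal $x^*$ and $R^*$. One edge case needs a remark: if every $R(x)$ is zero, $x^*$ is never assigned. I would initialize $x^*\gets 0$ (or compare with $\ge$) so that $x^*=L_1=0$ is returned, which is optimal in that case.

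For the runtime, Lemma~\ref{lemma:naiveComputationLengths} gives $O(n^22^n)$ for Phase~1, and $|\mathcal{L}|=K\le 2^n$. Lemma~\ref{lemma:naiveExpectedRev} gives $O(n2^n)$ per candidate, so Phase~2 takes $O(2^n\cdot n2^n)=O(n2^{2n})$ in total, which dominates $O(n^22^n)$. For space, the Held--Karp table needs $O(n2^n)$. The table of $\tau$ values and the set $\mathcal{L}$ need $O(2^n)$. The revenue table $\text{rev}(x,\cdot)$ needs $O(2^n)$ and can be reused across candidates, since only $R^*$ and $x^*$ are kept between iterations.

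I expect the main obstacle to be bookkeeping in the per-candidate evaluation. I must argue that computing $\mathbb{P}(F\mid x)$ for all $F$ fits within $O(n2^n)$ per candidate, either by a direct product over $n$ factors for each $F$, or incrementally as $\mathbb{P}(F\mid x)=\mathbb{P}(F\setminus\{a\}\mid x)\,p_a/(1-p_a)$ with care when $p_a\in\{0,1\}$. The direct $O(n)$ product per subset already gives the bound, so I would use that and avoid the division issue. I would also assume that evaluating each $p_a(x)$ takes constant time, so that all $n$ values can be precomputed once per candidate.
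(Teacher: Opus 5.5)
Your proposal is correct and follows the paper's proof. Correctness comes from Lemma~\ref{lemma:discretisation_length} together with exact computation of $\mathcal{L}$ and of the expected revenue at every candidate. The time bound combines the $O(n^2 2^n)$ of Lemma~\ref{lemma:naiveComputationLengths} with at most $2^n$ candidates at $O(n2^n)$ each from Lemma~\ref{lemma:naiveExpectedRev}, and the space is dominated by the Held--Karp table while the Phase~2 table is reused.

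Your extra points are sound refinements rather than a different route:
\begin{itemize}
\item the induction on $|F|$ proving the recurrence for $\text{rev}(x,F)$, which the paper's lemma justifies only through the evaluation order;
\item the explicit $O(n)$-per-subset accounting for $\mathbb{P}(F\mid x)$;
\item the fix for $x^*$ remaining unassigned when every $R(x)=0$.
\end{itemize}
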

	\begin{proof}{Proof.}
		We first argue correctness. By Lemma~\ref{lemma:discretisation_length}, the optimal expected revenue is attained at some $x \in \mathcal{L}$. Phase~1 computes $\mathcal{L}$ exactly (Lemma~\ref{lemma:naiveComputationLengths}), Phase~2 evaluates the expected revenue at every candidate in $\mathcal{L}$ (Lemma~\ref{lemma:naiveExpectedRev}), and the algorithm returns a candidate with maximum expected revenue, which is therefore optimal.

		Regarding the runtime, Phase~1 requires $O(n^2 2^n)$ time by Lemma~\ref{lemma:naiveComputationLengths}, and Phase~2 evaluates each of the at most $2^n$ candidates in $O(n 2^n)$ time by Lemma~\ref{lemma:naiveExpectedRev}, for a total of $O(n\,2^{2n})$. Since the two phases run sequentially, the overall runtime is $O(n^2 2^n + n\,2^{2n}) = O(n\,2^{2n})$, where the final step uses $n \leq 2^n$. The space requirement is dominated by the Held--Karp table of Lemma~\ref{lemma:naiveComputationLengths}, which uses $O(n 2^n)$ space; the table of Phase~2 requires $O(2^n)$ space and is reused across candidates.
	\Halmos
\end{proof}

    \subsection{Integrated Algorithm}\label{sec:integrated}
	
	The brute-force algorithm recomputes the revenue for every subset at each candidate point, leading to the $O(n 2^{2n})$ runtime. We now present an algorithm that avoids this repeated work. It requires, however, that all customers share the same revenue $r_a = r$ and the same acceptance probability function $p_a = p$. Under these assumptions, we precompute an auxiliary table $\Gamma$ that summarizes, for each candidate tour length, how many subsets of each size can serve a given number of customers. Once $\Gamma$ is available, the expected revenue at any candidate point can be evaluated in $O(n^2)$ time, reducing the overall runtime to $O(n^2 2^n)$.
	
	Formally, we define $\Gamma(i, k, j)$ as the number of subsets $F \subseteq A$ with $|F| = k$ for which $\text{rev}(L_j, F) = i \cdot r$, where $r$ is the common revenue per customer. That is, $\Gamma(i, k, j)$ counts how many $k$-element subsets can serve exactly $i$ customers under arrival commitment $L_j$. The algorithm is summarized in Algorithm~\ref{alg:integrated} and proceeds in two phases. Phase~1 (lines~\ref{line:int:phase1_start}--\ref{line:int:phase1_end}) first computes all tour lengths via Held--Karp and sorts them into the candidate set $\mathcal{L}$. It then builds the table $\Gamma$ incrementally, maintaining for every subset $S \subseteq A$ a counter $\nu(S)$ that records the maximum number of customers of $S$ that can be served under the tour lengths processed so far: for each tour length $L_j$ in ascending order, it identifies the subsets $S$ with $\tau(S) = L_j$ and, processing them in order of non-decreasing cardinality, records that $S$ can now serve all $|S|$ of its customers (lines~\ref{line:int:update_start}--\ref{line:int:update_end}). The \textsc{Propagate} procedure (lines~\ref{line:int:prop_start}--\ref{line:int:prop_end}) then exploits the triangle inequality: since any superset $S' \supset S$ can serve at least as many customers as $S$, the table entries for all supersets with a lower current value are updated iteratively using a stack. By the triangle inequality, $\tau(S \setminus \{a\}) \leq \tau(S)$ for all $a \in S$, so when $S$ is processed, every proper subset of $S$ with the same tour length has already been processed and propagated, which guarantees $\nu(S) = |S| - 1$ at that moment. Phase~2 (lines~\ref{line:int:phase2_start}--\ref{line:int:phase2_end}) evaluates the expected revenue at each candidate $L_j$ in closed form using $\Gamma$ (line~\ref{line:int:exp_rev}), avoiding the exponential enumeration over subsets, and returns the candidate with the highest expected revenue.
	
	\begin{algorithm}[htbp]
		\SingleSpacedXII
		\caption{Integrated algorithm for the \ac{TSPSC}}
		\label{alg:integrated}
		\begin{algorithmic}[1]
			\State \textbf{Input:} Graph $G = (V, E)$, probability function $p$, revenue $r$
			\State \textbf{Output:} Optimal arrival commitment $x^*$ and expected revenue $R^*$
			\Statex
			\Comment{Phase 1: Compute all tour lengths and table $\Gamma$}
			\State Compute $\tau(F)$ for all $F \subseteq A$ via Held--Karp (as in Algorithm~\ref{alg:bruteForce}) \label{line:int:phase1_start}
			\State $\mathcal{L} = \{L_1, \ldots, L_K\} \gets$ distinct values of $\{\tau(F) : F \subseteq A\}$ in ascending order
			\State $\Gamma(i, k, 0) \gets 0$ for all $i \in \{0\} \cup [k]$, $k \in [n]$ \Comment{$\Gamma(0, \cdot, \cdot)$ is a dummy row, never read}
			\State $\nu(S) \gets 0$ for all $S \subseteq A$ \Comment{$\nu(S)$: customers of $S$ currently servable}
			\For{$j \in [K]$}
			\State $\Gamma(i, k, j) \gets \Gamma(i, k, j-1)$ for all $i \in \{0\} \cup [k]$, $k \in [n]$ \Comment{carry over previous counts}
			\For{all nonempty $S$ with $\tau(S) = L_j$, in order of non-decreasing $|S|$} \label{line:int:update_start}
			\State $\nu(S) \gets |S|$ \Comment{$S$ becomes fully servable}
			\State $\Gamma(|S|, |S|, j) \gets \Gamma(|S|, |S|, j) + 1$
			\State $\Gamma(|S|-1, |S|, j) \gets \Gamma(|S|-1, |S|, j) - 1$ \Comment{remove $S$ from its previous bucket}
			\State \Call{Propagate}{$S, |S|, j$}
			\EndFor \label{line:int:update_end}
			\EndFor \label{line:int:phase1_end}
			\Statex
			\Comment{Phase 2: Evaluate expected revenue at each candidate}
			\State $R^* \gets 0$ \label{line:int:phase2_start}
			\For{$j \in [K]$}
			\State $R(L_j) \gets \sum_{k=1}^{n} p(L_j)^k (1 - p(L_j))^{n-k} \sum_{i=1}^{k} i \cdot r \cdot \Gamma(i, k, j)$ \label{line:int:exp_rev}
			\If{$R(L_j) > R^*$}
			\State $R^* \gets R(L_j)$
			\State $x^* \gets L_j$
			\EndIf
			\EndFor
			\State \Return $x^*$, $R^*$ \label{line:int:phase2_end}
			\Statex
			\Function{Propagate}{$S, c, j$} \label{line:int:prop_start}
			\State $\mathcal{Q} \gets (S)$ \Comment{Initialize stack}
			\While{$\mathcal{Q} \neq \emptyset$}
			\State $\hat{S} \gets \text{pop from } \mathcal{Q}$
			\For{$a \in A \setminus \hat{S}$}
			\State $S' \gets \hat{S} \cup \{a\}$
			\If{$\nu(S') < c$} \Comment{superset $S'$ can now serve $c$ customers}
			\State $\Gamma(\nu(S'), |S'|, j) \gets \Gamma(\nu(S'), |S'|, j) - 1$
			\State $\nu(S') \gets c$
			\State $\Gamma(c, |S'|, j) \gets \Gamma(c, |S'|, j) + 1$
			\State push $S'$ onto $\mathcal{Q}$
			\EndIf
			\EndFor
			\EndWhile
			\EndFunction \label{line:int:prop_end}
		\end{algorithmic}
	\end{algorithm}
	
	We now establish correctness and runtime.
	
	\begin{lemma}\label{lemma:computeG}
		Phase~1 computes $\Gamma$ in $O(n^2 2^n)$ time and $O(n^2 K + n 2^n)$ space.
	\end{lemma}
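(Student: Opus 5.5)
Correctness and complexity are separate, and I would treat them in that order. For correctness I would prove, by induction over the processing order (tour lengths $L_j$ ascending, subsets within a level by non-decreasing $|S|$), the invariant that after level $j$ we have $\nu(S) = \max\{|T| : T \subseteq S,\ \tau(T) \le L_j\}$ for every $S \subseteq A$. Together with the bookkeeping identity that $\Gamma(i,k,j)$ counts the $k$-sets with current $\nu$-value $i$, this invariant gives $\text{rev}(L_j,F) = r\,\nu(F)$. The identity holds if we read the initialization as all $k$-sets sitting in bucket $0$ (the dummy row, never read in Phase~2). The initial state $\nu \equiv 0$ matches level $j=0$ because only $\emptyset$ has tour length $0$.

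In the inductive step, a set $T$ with $\tau(T)=L_j$ raises the servable count of exactly its supersets, to at least $|T|$. \textsc{Propagate} from $T$ explores by single-element additions and only continues through sets with $\nu < |T|$. I must therefore argue that every superset $S'$ with $\nu(S') < |T|$ is reached. Take a chain $T \subset T\cup\{a_1\} \subset \dots \subset S'$. Every set in the chain has $\nu \le \nu(S') < |T|$ by monotonicity of $\nu$ under inclusion, which I would carry along as part of the invariant. Hence each is pushed, unless it was already raised to $|T|$ by an earlier set of the same size. That case needs a small amendment, since the search is then blocked. I would handle it by observing that an earlier equal-size set $T_0$ propagated to the blocking set, and then redo the chain argument from $T_0$ with an induction on the set's size.

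The claim $\nu(S)=|S|-1$ when $S$ is processed follows from the triangle inequality, which gives $\tau(S\setminus\{a\}) \le \tau(S)$. The removed subsets have either a smaller tour length or the same tour length and smaller size, so they have already been processed. This justifies the decrement of bucket $|S|-1$.

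The complexity count is where I expect the main obstacle. Held--Karp costs $O(n^2 2^n)$ by Lemma~\ref{lemma:naiveComputationLengths}, and sorting the tour lengths costs $O(2^n n)$. Copying $\Gamma$ costs $O(n^2)$ per level, so $O(n^2 K) \subseteq O(n^2 2^n)$. Each push strictly increases some $\nu(S')$, and $\nu(S')$ takes at most $n+1$ values. Hence there are at most $(n+1)2^n$ pushes in total, each pop scans $O(n)$ neighbours, and all propagation costs $O(n^2 2^n)$. The subtle point is that $\nu(S')$ might be raised to a value below an earlier one and cause a decrement. This cannot happen: the sizes $c$ pushed are non-decreasing within a level, and $|T| > \nu$ holds whenever $T$ is newly processed. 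So $\nu$ is monotone and the potential argument holds.

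For space, $\Gamma$ needs $O(n^2)$ entries per level, giving $O(n^2K)$ total. The arrays $\tau$ and $\nu$ and the stack hold at most $2^n$ sets each, and the Held--Karp table needs $O(n2^n)$. Together these give $O(n^2K + n2^n)$.
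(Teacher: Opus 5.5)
Your proposal is correct and follows the paper's argument. The complexity part matches it step for step: Held--Karp in $O(n^2 2^n)$, then a potential argument in which every push strictly raises some $\nu(S')$, so each subset is popped $O(n)$ times at $O(n)$ cost per pop (the paper sharpens this to $|S|$ pops, giving $n^2 2^{n-1}$ in total), then $O(n^2)$ per level for copying $\Gamma$, and the same space accounting. Your added correctness invariant is what the paper proves separately in Theorem~\ref{thm:opt2}. One harmless slip: nonempty subsets can have tour length $0$ because $d$ may take the value $0$ (the reduction in Theorem~\ref{thm:complexity} uses zero travel times), but your base case only needs that nothing has been processed before $j=1$.
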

	\begin{proof}{Proof.}
		The tour lengths are computed via Held--Karp in $O(n^2 2^n)$ time by Lemma~\ref{lemma:naiveComputationLengths}. It remains to bound the cost of computing $\Gamma$. For a given $j$, the \textsc{Propagate} procedure is called for each subset $S$ with $\tau(S) = L_j$, and iteratively propagates to supersets via a stack. For each subset $S \subseteq A$, the value $\nu(S)$ can increase at most $|S|$ times over the entire algorithm, and each subset popped from the stack requires iterating over at most $n - |S|$ elements of $A \setminus S$. Since the guard $\nu(S') < c$ ensures that each subset is enqueued at most once per value increase, each of the $\binom{n}{k}$ subsets of size $k$ is popped at most $k$ times, at a cost of $O(n)$ per pop. The total number of operations across all invocations is therefore bounded by
		\[
		\sum_{k=1}^{n} \binom{n}{k}\, k\, n = n^2 \sum_{k=1}^{n} \binom{n-1}{k-1} = n^2 \cdot 2^{n-1}.
		\]
		Copying $\Gamma(\cdot, \cdot, j-1)$ to $\Gamma(\cdot, \cdot, j)$ takes $O(n^2)$ per iteration, for a total of $O(n^2 K)$. The overall runtime is $O(n^2 2^n)$. The space is dominated by the table $\Gamma$ with $O(n^2 K)$ entries and the Held--Karp table with $O(n 2^n)$ entries.
	\Halmos
\end{proof}
	
	Phase~2 evaluates the expected revenue at each candidate using $\Gamma$, avoiding the exponential enumeration over subsets.
	
	\begin{lemma}\label{lemma:integratedExpectedRev}
		Given $\Gamma$ and an arrival commitment $L_j \in \mathcal{L}$, the expected revenue $\mathbb{E}_{F\mid L_j}[\text{rev}(L_j, F)]$ can be computed in $O(n^2)$ time.
	\end{lemma}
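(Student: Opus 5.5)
The plan is to show that the closed-form expression in line~\ref{line:int:exp_rev} of Algorithm~\ref{alg:integrated} equals the expected revenue, and then to count its arithmetic operations. The starting point is the definition
\[
\mathbb{E}_{F\mid L_j}[\text{rev}(L_j,F)] = \sum_{F\subseteq A} \text{rev}(L_j,F)\,\mathbb{P}(F\mid L_j).
\]
Because all customers share the same $p$, the probability $\mathbb{P}(F\mid L_j) = p(L_j)^{|F|}(1-p(L_j))^{n-|F|}$ depends on $F$ only through its cardinality $k=|F|$. Because all customers share the same revenue $r$, every $\text{rev}(L_j,F)$ is a multiple $i\cdot r$ with $i\in\{0,\dots,k\}$: the optimal served set $S\subseteq F$ contributes $|S|\,r$, and $|S|\le |F|$.

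The first step is to partition the sum over $F$ by the pair $(k,i)$, where $k=|F|$ and $\text{rev}(L_j,F)=i\cdot r$. By definition of $\Gamma$, the class indexed by $(k,i)$ contains exactly $\Gamma(i,k,j)$ sets. Each set in it contributes $i\,r\,p(L_j)^k(1-p(L_j))^{n-k}$. Summing over the classes reproduces the formula in line~\ref{line:int:exp_rev}. The terms with $i=0$ contribute nothing, and the term $k=0$ (the empty set) has revenue $0$, so dropping both is harmless. This step uses only the meaning of $\Gamma$, so I take its correctness, namely that Phase~1 indeed stores these counts, as given from the construction and Lemma~\ref{lemma:computeG}.

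The second step is the runtime. Precompute $p(L_j)$ and $1-p(L_j)$ once. Then form the powers $p(L_j)^k$ and $(1-p(L_j))^{n-k}$ for all $k$ by iterated multiplication in $O(n)$ total. The double sum has $\sum_{k=1}^n k = O(n^2)$ terms, each needing $O(1)$ work with a table lookup in $\Gamma$. The total is therefore $O(n^2)$.

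The only delicate point is to make sure that $\Gamma(\cdot,\cdot,j)$ counts each $F$ in exactly one class for the commitment $L_j$. This holds because $\text{rev}(L_j,F)$ is a single well-defined value, so the classes indeed partition $2^A$. Beyond that, the statement is routine.
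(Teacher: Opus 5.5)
Your proposal is correct and is essentially the paper's argument. The paper conditions on $|F|=k$, using $\mathbb{P}(|F|=k\mid L_j)=\binom{n}{k}p(L_j)^k(1-p(L_j))^{n-k}$ and the fact that all $k$-subsets are equally likely; this amounts to your partition of the subsets by $(k,i)$, after which both arguments count the $O(n^2)$ terms of the resulting double sum. One minor remark: the meaning of $\Gamma$ is part of the lemma's hypothesis, and it is verified in the proof of Theorem~\ref{thm:opt2}, not in Lemma~\ref{lemma:computeG}, which only bounds runtime and space. So you do not need to appeal to Phase~1 at all.
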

	\begin{proof}{Proof.}
		Since the acceptance probabilities are identical, the expected revenue can be decomposed by subset size:
		\begin{align*}
			\mathbb{E}_{F\mid L_j}\left[ \text{rev}(L_j, F)\right]
			&= \sum_{k=1}^{n} \mathbb{P}(|F| = k \mid L_j) \sum_{\{F \subseteq A : |F| = k\}} \mathbb{P}(F \mid L_j, |F| = k)\, \text{rev}(L_j, F) \\
			&= \sum_{k=1}^{n} p(L_j)^k (1 - p(L_j))^{n-k} \sum_{i=1}^{k} i \cdot r \cdot \Gamma(i, k, j),
		\end{align*}
		where we used $\mathbb{P}(|F| = k \mid x) = \binom{n}{k} p(x)^k (1-p(x))^{n-k}$ and $\mathbb{P}(F \mid x, |F| = k) = \binom{n}{k}^{-1}$. The expression is a double sum over $O(n^2)$ terms.
	\Halmos
\end{proof}
	
	\begin{theorem}\label{thm:opt2}
		Suppose all customers share the same revenue $r_a = r$ and the same acceptance probability function $p_a = p$. Then Algorithm~\ref{alg:integrated} computes an optimal solution to the \ac{TSPSC} in $O(n^2 2^n)$ time and $O(n^2 2^n)$ space.
	\end{theorem}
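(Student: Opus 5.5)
The proof splits into correctness and complexity. Complexity follows from the lemmas already stated. Correctness reduces to showing that, after iteration $j$ of Phase~1, $\Gamma(i,k,j)$ equals the number of $k$-subsets $F$ with $\text{rev}(L_j,F)=i\cdot r$. Given that, Lemma~\ref{lemma:integratedExpectedRev} shows Phase~2 computes the exact expected revenue at every $L_j$. Lemma~\ref{lemma:discretisation_length} then shows the maximum over $\mathcal{L}$ is the optimum.

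\textbf{Invariant.} With identical revenues, $\text{rev}(L_j,F)=r\cdot\max\{|S|\colon S\subseteq F,\ \tau(S)\le L_j\}$. I would prove by induction over the processing order that, after all subsets with tour length at most $L_j$ have been handled, $\nu(F)$ equals this maximum cardinality $m_j(F)$ for every $F$. I would also show that $\Gamma(\cdot,\cdot,j)$ tracks the bucket counts of $\nu$ by size. Three facts are needed.
\begin{itemize}
\item[(i)] $\nu$ never exceeds $m_j$. Every increase sets $\nu(S')=c=|S|$ for some processed $S\subseteq S'$ with $\tau(S)\le L_j$.
\item[(ii)] $\nu$ only increases. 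For \textsc{Propagate}, this follows from the guard $\nu(S')<c$. For the update $\nu(S)\gets|S|$, I would argue that just before it, $\nu(S)=|S|-1$. By the triangle inequality, shortcutting gives $\tau(S\setminus\{a\})\le\tau(S)$, so each $(|S|-1)$-subset has already been processed and propagated, either at an earlier $j$ or earlier in the non-decreasing-cardinality order. Hence $\nu(S)\ge|S|-1$. Also $\nu(S)\neq|S|$, since $S$ has not yet been processed and no other $|S|$-set is contained in $S$. This justifies decrementing the bucket $|S|-1$ in the algorithm. It also covers $|S|=1$, where $\nu=0$ initially and the dummy row $i=0$ absorbs the decrement.
\item[(iii)] $\nu$ reaches $m_j$. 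Take a maximizing $S\subseteq F$ with $\tau(S)\le L_j$. When $S$ is processed, \textsc{Propagate} follows any chain $S\subset S\cup\{a_1\}\subset\cdots\subset F$. Along this chain each set either gets raised to $|S|$ and pushed, or already has $\nu\ge|S|$.
\end{itemize}

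\textbf{The hard step.} The subtle case in (iii) is a chain set that is not pushed because it already has $\nu\ge c$. Its own supersets might then seem to miss the update. I would handle this with a secondary invariant: at all times, $\nu$ is monotone under inclusion, i.e.\ $S_1\subseteq S_2$ implies $\nu(S_1)\le\nu(S_2)$. Any set that was raised to value $\nu$ was itself propagated with that value or larger, so all of its supersets already have at least that value. I would prove this secondary invariant by induction on the sequence of updates. I would also note that the $\Gamma$ entries stay consistent, because every change to $\nu$ moves exactly one subset between buckets of the same size. Here $\Gamma(\cdot,\cdot,0)$ counts as all subsets in bucket $0$; the $i=0$ row is never read, so its initialisation is harmless.

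\textbf{Complexity.} Held--Karp and the $\Gamma$ construction take $O(n^2 2^n)$ time by Lemma~\ref{lemma:computeG}. Phase~2 takes $O(n^2)$ per candidate by Lemma~\ref{lemma:integratedExpectedRev}, so $O(n^2K)=O(n^2 2^n)$ in total since $K\le 2^n$. Space is $O(n^2K+n2^n)\subseteq O(n^2 2^n)$. The main obstacle is the invariant argument above, specifically the monotonicity of $\nu$ under inclusion that makes \textsc{Propagate}'s pruning sound.
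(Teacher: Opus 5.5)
Your proposal is correct and follows essentially the same route as the paper. You use the same invariant $\nu=m_j$ after iteration $j$ (which gives $\Gamma$ its bucket semantics), the same argument via the triangle inequality and cardinality tie-breaking that $\nu(S)=|S|-1$ when $S$ is processed, the same inclusion-chain monotonicity argument for \textsc{Propagate}, and the same appeal to Lemmas~\ref{lemma:computeG}, \ref{lemma:integratedExpectedRev}, and~\ref{lemma:discretisation_length}; you are in fact more explicit than the paper about why the pruning in \textsc{Propagate} is sound.

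One small fix: $\nu$ is not monotone under inclusion ``at all times'', since monotonicity fails between the assignment $\nu(S)\gets|S|$ and the end of the corresponding \textsc{Propagate} call. State the secondary invariant only between processed sets, and apply the chain argument to the state just before each call.
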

	\begin{proof}{Proof.}
		We first argue correctness. For $F \subseteq A$ and $j \in [K]$, let $\nu_j(F) \coloneqq \max\{|S| \colon S \subseteq F,\ \tau(S) \leq L_j\}$ denote the maximum number of customers of $F$ that can be served under commitment $L_j$; under identical revenues, $\text{rev}(L_j, F) = r \cdot \nu_j(F)$, and by the triangle inequality, $\nu_j$ is non-decreasing with respect to set inclusion. We claim that Phase~1 maintains the following invariant: after the iteration for $L_j$, we have $\nu(F) = \nu_j(F)$ for all $F \subseteq A$, and hence $\Gamma(i, k, j)$ equals the number of subsets $F \subseteq A$ with $|F| = k$ and $\nu_j(F) = i$. To see this, note that $\nu_j(F) > \nu_{j-1}(F)$ holds exactly if $F$ contains a subset $S$ with $\tau(S) = L_j$ and $|S| > \nu_{j-1}(F)$. The loop in lines~\ref{line:int:update_start}--\ref{line:int:update_end} processes every such $S$: since tied subsets are processed in order of non-decreasing cardinality, $\nu(S) = |S| - 1$ holds when $S$ is processed, so the update moves $S$ into bucket $|S|$ and the decrement removes it from bucket $|S| - 1$. \textsc{Propagate} then raises $\nu(S')$ to $|S|$ for every superset $S'$ of $S$ with a smaller current value and adjusts $\Gamma$ accordingly; every such superset is reached because, by monotonicity, all sets on an inclusion chain from $S$ to $S'$ also have values below $|S|$. Consequently, $\Gamma$ has the claimed semantics, Phase~2 evaluates the expected revenue exactly at every candidate by Lemma~\ref{lemma:integratedExpectedRev}, and, since the optimal expected revenue is attained at some $L_j \in \mathcal{L}$ by Lemma~\ref{lemma:discretisation_length}, the returned candidate is optimal.

		Regarding the runtime, Phase~1 requires $O(n^2 2^n)$ time by Lemma~\ref{lemma:computeG}, and Phase~2 evaluates each of the at most $2^n$ candidates in $O(n^2)$ time by Lemma~\ref{lemma:integratedExpectedRev}, for a total of $O(n^2 2^n)$. Since the two phases run sequentially, the overall runtime is $O(n^2 2^n)$. The space requirement is dominated by the table $\Gamma$ with $O(n^2 K) = O(n^2 2^n)$ entries, together with the $O(n 2^n)$ space of the Held--Karp computation by Lemma~\ref{lemma:computeG}.
	\Halmos
\end{proof}
    
	\subsection{Heuristic Algorithm}\label{sec:heuristic}
		
	Since the exact evaluation of the expected revenue requires enumerating all $2^n$ customer subsets, we propose a sampling-based heuristic that identifies near-optimal commitments efficiently. Throughout this section, we assume, as in Section~\ref{sec:integrated}, that all customers share the same revenue $r_a = r$ and the same acceptance probability function $p_a = p$.
	Section~\ref{sec:bsh} describes the overall algorithmic framework, which adaptively allocates samples across subset sizes.
	Section~\ref{sec:revenue_estimation} details the revenue estimation procedure, including partial-service recovery for infeasible subsets.
	Section~\ref{sec:estimation_error} discusses the two main sources of approximation error.
	\subsubsection{Adaptive Binomial Sampling Heuristic}\label{sec:bsh}
	
	The algorithm proceeds in three phases: sampling, revenue estimation, and an adaptive update step.
	For each subset size $k \in [n]$, let $N_k$ denote the number of sampled $k$-subsets, and let $T_k$ denote their tour lengths (computed via \ac{LKH} \citep{lin1973effective}), stored in ascending order.
	Samples accumulate across iterations: new tour lengths are appended to $T_k$, so the estimates improve as the algorithm progresses.
	The algorithm is summarized in Algorithm~\ref{alg:bsh}.
	
	\begin{algorithm}[htbp]
		\SingleSpacedXII
		\caption{Adaptive Binomial Sampling Heuristic (BSH)}
		\label{alg:bsh}
		\begin{algorithmic}[1]
			\State \textbf{Input:} Graph $G = (V, E)$, probability function $p$, revenue $r$, sample budget $N$, precision $\varepsilon > 0$, iteration limit $I_{\max}$
			\State \textbf{Output:} Estimated optimal commitment $\hat{x}$ and expected revenue $\hat{R}$
			\Statex
			\State $T_k \gets ()$ for all $k \in [n]$
			\State $\bar{p} \gets 0.5$ \label{line:bsh:p_init} \Comment{initial acceptance estimate}
			\State $\hat{x}_{\mathrm{prev}} \gets 0$
			\Statex
			\Comment{Phase 1: Sample tour lengths}
			\State $w_k \gets \binom{n}{k}\, \bar{p}^{\,k} (1-\bar{p})^{n-k}$ for all $k \in [n]$ \label{line:bsh:weights}\label{line:bsh:sample_start}
			\For{$k = 1, \ldots, n$}
			\State $N_k \gets \left\lfloor N \cdot w_k \big/ \sum_{j=1}^{n} w_j \right\rfloor$
			\State Draw $N_k$ random subsets of size $k$ and compute their tour lengths via \ac{LKH}; append to $T_k$
			\EndFor \label{line:bsh:sample_end}
			\Statex
			\Comment{Phase 2: Estimate revenue at each candidate}
			\State $\mathcal{X} \gets$ distinct values in $\bigcup_{k=1}^n T_k$ \label{line:bsh:candidates} \Comment{sampled candidate set}
			\State $\hat{x} \gets \argmax_{x \in \mathcal{X}}\; \Call{EstimateRevenue}{x, T_1, \ldots, T_n}$
			\State $\hat{R} \gets \Call{EstimateRevenue}{\hat{x}, T_1, \ldots, T_n}$
			\Statex
			\Comment{Phase 3: Update and check for convergence}
			\State \textbf{if} $|\hat{x} - \hat{x}_{\mathrm{prev}}| < \varepsilon$ \textbf{or} $I_{\max}$ iterations have been performed \textbf{then return} $\hat{x}$, $\hat{R}$ \label{line:bsh:terminate}
			\State $\hat{x}_{\mathrm{prev}} \gets \hat{x}$
			\State $\bar{p} \gets p(\hat{x})$ \label{line:bsh:alpha_update} \Comment{update the acceptance estimate}
			\State Go to line~\ref{line:bsh:sample_start} \label{line:bsh:iterate}
		\end{algorithmic}
	\end{algorithm}
	
	Phase~1 (lines~\ref{line:bsh:sample_start}--\ref{line:bsh:sample_end}) distributes the sample budget across subset sizes proportionally to the weights~$w_k$.
	In the first iteration, the weights correspond to $\mathrm{Binomial}(n, 0.5)$ (line~\ref{line:bsh:weights}), which is maximally spread across subset sizes.
	In practice, for small subset sizes where $N_k \geq \binom{n}{k}$, all subsets are enumerated exhaustively.
	Tour lengths are computed via \ac{LKH} and appended to the existing samples.
	Phase~2 selects the candidate commitment $\hat{x}$ that maximizes the estimated expected revenue over the sampled candidate set $\mathcal{X}$, using the \textsc{EstimateRevenue} function.
	Phase~3 (lines~\ref{line:bsh:terminate}--\ref{line:bsh:iterate}) first checks for termination: if the estimated commitment $\hat{x}$ changes by less than $\varepsilon$ between iterations or the iteration limit $I_{\max}$ is reached, the algorithm terminates.
	Otherwise, it updates the acceptance estimate to $p(\hat{x})$ and recomputes the sampling weights accordingly.
	This concentrates future samples on the subset sizes most likely to occur under the evolving estimate of the optimal commitment.
	
	\subsubsection{Revenue Estimation}\label{sec:revenue_estimation}
	
	The core of the heuristic is the revenue estimation procedure, which computes the estimated expected revenue at a given commitment $x$ without enumerating all subsets.
	For a given arrival commitment $x$, define the \emph{served share}
	\[
	\hat{f}_k(x) \coloneqq \frac{|\{t \in T_k : t \leq x\}|}{|T_k|},
	\]
	if \(T_k \neq \emptyset\) and \(\hat{f}_k(x) \coloneqq 0\) otherwise
	which estimates the fraction of $k$-subsets whose tour length does not exceed $x$; we collect these in the vector $\hat{f}(x) = (\hat{f}_1(x), \ldots, \hat{f}_n(x))$.
	The estimated expected revenue is
	\begin{equation}\label{eq:bsh_revenue}
		\hat{R}(x) = \sum_{k=1}^{n} \binom{n}{k} p(x)^k (1-p(x))^{n-k} \cdot \bigl(
		\hat{f}_k(x) \cdot k \cdot r
		+ (1 - \hat{f}_k(x)) \cdot \textsc{PartialRev}(\hat{f}(x), k, k-1)
		\bigr).
	\end{equation}
	The first term in the product is the probability that exactly $k$ of the $n$ customers accept the offer.
	The first term inside the parentheses accounts for the $k$-subsets that can serve all $k$ customers; we call these \emph{yes-instances}, and subsets whose full tour does not fit within the commitment \emph{no-instances}.
	The second term estimates the revenue from no-instances that may still serve a feasible subgroup, via the recursive procedure \textsc{PartialRev}.
	The complete estimation, including partial-service recovery, is summarized in Algorithm~\ref{alg:bsh_revenue}.
	
	\begin{algorithm}[htbp]
		\SingleSpacedXII
		\caption{Revenue estimation with partial-service recovery}
		\label{alg:bsh_revenue}
		\begin{algorithmic}[1]
			\Function{EstimateRevenue}{$x, T_1, \ldots, T_n$}
			\State $\hat{R}(x) \gets 0$
			\For{$k = 1, \ldots, n$} \label{line:rev:loop}
			\State $\hat{f}_k(x) \gets |\{t \in T_k : t \leq x\}| \big/ |T_k|$ if $T_k \neq \emptyset$, and $\hat{f}_k(x) \gets 0$ otherwise \label{line:rev:fhat}
			\State $\hat{R}(x) \gets \hat{R}(x) + \binom{n}{k} p(x)^k (1-p(x))^{n-k} \cdot \bigl( \hat{f}_k(x) \cdot k \cdot r + (1 - \hat{f}_k(x)) \cdot \Call{PartialRev}{\hat{f}(x), k, k-1} \bigr)$ \label{line:rev:add}
			\EndFor \label{line:rev:loop_end}
			\State \Return $\hat{R}(x)$
			\EndFunction
			\Statex
			\Function{PartialRev}{$\hat{f}(x), h, k$} \label{line:pr:start} \Comment{expected revenue of a no-instance of size $h$}
			\If{$k = 0$} \Return $0$ \EndIf \label{line:pr:base} \Comment{no feasible subgroup remains}
			\State $\tilde{f}_k \gets \hat{f}_k(x) \cdot (1 - \hat{f}_{k+1}(x))$ \label{line:pr:adjust} \Comment{overlap-adjusted served share}
			\State $g_k \gets 1 - (1 - \tilde{f}_k)^{\binom{h}{k}}$ \label{line:pr:combine} \Comment{probability that some $k$-subgroup is feasible}
			\State \Return $g_k \cdot k \cdot r + (1 - g_k) \cdot \Call{PartialRev}{\hat{f}(x), h, k-1}$ \label{line:pr:recurse}
			\EndFunction \label{line:pr:end}
		\end{algorithmic}
	\end{algorithm}
	
	The function iterates over subset sizes (lines~\ref{line:rev:loop}--\ref{line:rev:loop_end}).
	For each $k$, line~\ref{line:rev:fhat} computes the served share by binary search over the sorted tour lengths $T_k$.
	Line~\ref{line:rev:add} accumulates the weighted contribution: the binomial probability $\binom{n}{k} p(x)^k (1-p(x))^{n-k}$ that exactly $k$ customers accept, multiplied by the estimated revenue from both fully and partially served subsets.
	
	The \textsc{PartialRev} function (lines~\ref{line:pr:start}--\ref{line:pr:end}) estimates the expected revenue from a subset of size $h$ that cannot serve all $h$ customers, by recursing downward from $k = h - 1$.
	Line~\ref{line:pr:adjust} adjusts for overlap: the served share $\hat{f}_k(x)$ measures the fraction of all $k$-subsets that can be served, but some of these are subsets of $(k{+}1)$-subsets already counted as yes-instances at the level above; the adjustment $\tilde{f}_k = \hat{f}_k(x) \cdot (1 - \hat{f}_{k+1}(x))$ removes this overlap.
	Line~\ref{line:pr:combine} combines across subgroups: an $h$-subset contains $\binom{h}{k}$ subgroups of size $k$, and treating their feasibilities as independent, the probability that at least one is feasible is $g_k = 1 - (1 - \tilde{f}_k)^{\binom{h}{k}}$.
	Line~\ref{line:pr:recurse} recurses: with probability $g_k$, the subset serves $k$ customers; with the remaining probability $1 - g_k$, no $k$-subgroup is feasible, and the recursion continues to $k - 1$.
	
	\subsubsection{Sources of Estimation Error}\label{sec:estimation_error}
	The revenue estimator introduced in Section~\ref{sec:revenue_estimation} is subject to three sources of error: the approximation error of the tour-length oracle, the structural approximations in \textsc{PartialRev}, and the statistical error inherent in sampling.

	The \ac{LKH} does not guarantee optimal tour lengths.
	Let $\tau_{\mathrm{LKH}}(U)$ denote the tour length returned by \ac{LKH} for a customer subset $U \subseteq A$.
	Since $\tau_{\mathrm{LKH}}(U) \geq \tau(U)$, the served share $\hat{f}_k(x)$ computed from heuristic tour lengths is a lower bound on the true served share: subsets whose optimal tour fits within the commitment $x$ may be classified as infeasible if \ac{LKH} returns a suboptimal tour.
	This tends to depress the estimated revenue, although the partial-service recovery can partially offset the effect, so the net direction of the \ac{LKH}-induced bias is instance-dependent.
	In practice, \ac{LKH} produces near-optimal solutions for the instance sizes arising in ridepooling operations, so this bias is expected to be small.
	
	The \textsc{PartialRev} function introduces two simplifying assumptions.
	First, the overlap adjustment $\tilde{f}_k = \hat{f}_k(x) \cdot (1 - \hat{f}_{k+1}(x))$ in line~\ref{line:pr:adjust} treats the event that a $k$-subset is feasible as independent of whether it is contained in a feasible $(k+1)$-subset.
	In reality, if a $(k+1)$-subset has a short tour, its $k$-subsets are also likely to have short tours, so the adjustment tends to underestimate the fraction of $k$-subsets that contribute \emph{new} partial-service revenue.
	Second, the combination step in line~\ref{line:pr:combine} assumes that the feasibilities of distinct $k$-subgroups within the same $h$-subset are independent.
	Since subgroups sharing customers have positively correlated tour lengths, this overstates the probability that at least one $k$-subgroup is feasible.
	The two biases act in opposite directions: the overlap adjustment is conservative while the independence assumption is optimistic. Thus, the net effect is instance-dependent.

	Finally, the served shares $\hat{f}_k(x)$ are empirical frequencies based on $N_k$ samples and are therefore subject to statistical sampling error, which diminishes as the sample budget grows and as samples accumulate across iterations. A formal convergence analysis, in particular under the adaptive re-allocation of the sampling budget in Phase~3 of Algorithm~\ref{alg:bsh}, is beyond the scope of this paper.

 \section{Computational Study}
 \label{sec:experiments}
 This section evaluates the algorithms introduced in Section~\ref{sec:algorithms} in a computational study. We focus on three research questions.

 \textbf{Q1:} How well does the heuristic approximate the optimal expected revenue and arrival commitment, and how do its runtime and solution quality scale with the number of customers?

 \textbf{Q2:} What are the sources of the heuristic's estimation error, and how do they shape the estimated expected revenue?

 \textbf{Q3:} What operational regime does the revenue-maximizing commitment induce: how do the customer acceptance probability, the share of served customers, and the tightness of the commitment relative to the full-service tour behave across instance sizes, and at what revenue cost can the provider improve its service rate?
 
 All algorithms
 were implemented in Python~3.11. The computations were performed on the University of Hamburg's Hummel-2
 high-performance computing cluster (Debian GNU/Linux), whose compute nodes are equipped with two AMD EPYC~9654 processors (96 cores each) and 768~GB of RAM, using one core per instance.
 
 \subsection{Instance Generation and Algorithms}
 We generate 100 instances on a $15 \times 15$ kilometer grid representing a medium-sized city, with the depot located at the center and customer locations drawn uniformly at random. Each customer yields a unit revenue, $r = 1$. The acceptance probability is linear, $p(x) = \max(0,\, 1 - x/\text{slope})$, and, unless stated otherwise, we use slope $= 60$, meaning that customers reject any offer whose arrival commitment reaches one hour. Instances with $n \in \{2, 4, 6, 8, 10, 12, 15, 17, 19\}$ are referred to as \textit{small}; instances with $n \in \{20, 50, 100\}$ are referred to as \textit{large}. Sections~\ref{subsec:Performance}, \ref{subsec:ErrorSources}, and~\ref{subsec:Managerial} address Q1, Q2, and Q3 in turn, under this baseline slope; a sensitivity analysis of the acceptance-function slope is provided in Appendix~A.
 
 We evaluate three algorithms: the Held--Karp brute-force algorithm (HK, Algorithm~\ref{alg:bruteForce}) and the integrated algorithm (INT, Algorithm~\ref{alg:integrated}), which both compute the exact optimum, and the Adaptive Binomial Sampling Heuristic (BSH, Algorithm~\ref{alg:bsh}). The BSH obtains the tour length of a sampled customer subset either from the exact algorithms, which compute it individually (BSH-exact), or from the \ac{LKH} (BSH-\ac{LKH}). This yields three revenue measures: the optimum (HK/INT), BSH-exact, and BSH-\ac{LKH}. The research questions are addressed as follows. We answer Q1 with BSH-\ac{LKH}, the variant applicable at all instance sizes, benchmarked against the optimum on the small instances. We address Q2 by comparing BSH-exact with BSH-\ac{LKH}: since BSH-exact uses exact tour lengths, the gap between the two isolates the error introduced by the approximate tour-length computation. For the small instances, we additionally compare the revenue \emph{reported} by each BSH variant against the \emph{true} expected revenue of the commitment it returns, providing a direct measure of estimation accuracy. We answer Q3 with BSH-\ac{LKH} as well, characterizing the operational quantities at the commitment it returns across all instance sizes and reporting values based on the true expected revenue wherever the optimum is available.

 The heuristic uses a sample budget of $N = 30{,}000$ per iteration and terminates when the estimated commitment changes by less than $\varepsilon = 0.01$ between iterations or after 20 iterations, whichever occurs first.
 
 \subsection{Evaluation of Research Question Q1}\label{subsec:Performance}
  To answer Q1, we examine the runtime and solution quality reported in Tables~\ref{tab:full_results} and~\ref{tab:commitment_accuracy} and in Figure~\ref{fig:commitment_scatter_both}.
  
  Runtime grows sharply with $n$ for the exact algorithms. HK requires roughly $10{,}856$ seconds at $n = 15$ and could not be evaluated beyond this size. INT remains under $4$ seconds for all $n \leq 15$ and reaches $101$ seconds at $n = 19$, after which it fails at $n = 20$ due to memory exhaustion. The runtimes reported for BSH-exact are low because it reuses the tour lengths precomputed by HK and INT rather than computing them independently. BSH-\ac{LKH} has substantially higher runtimes at large $n$ ($10{,}763$ seconds at $n = 100$), reflecting the cost of repeated \ac{LKH} calls for every sampled subset. Notably, INT solves instances with up to 19 customers to optimality within seconds to about two minutes.
  
  Revenue increases monotonically with $n$ for all algorithms. For small instances we distinguish between the \emph{true} revenue, the exact expected revenue attained at the commitment returned by the heuristic, and the \emph{reported} revenue, the heuristic's own estimate of that value. For large instances, exact revenues are unavailable, so Table~\ref{tab:full_results} shows only BSH-\ac{LKH} reported revenues. Both BSH variants achieve near-optimal solution quality across all small instances, with a relative optimality gap of at most $0.32\%$ for all $n \leq 19$. The reported revenue continues to grow beyond the exactly solvable sizes, reaching a mean of $15.65$ at $n = 100$.
  
  The arrival commitment returned by both BSH variants closely tracks the optimal commitment. The mean commitment deviates from the optimum by at most about one time unit and almost always slightly exceeds it, with negligible impact on the attained revenue (see Table~\ref{tab:full_results}). At $n = 100$ the mean commitment reaches $47.0$. For $n \geq 8$, the heuristics identify the exact optimal commitment in $58\%$ to $77\%$ of instances, and in at least $84\%$ for smaller $n$ (Table~\ref{tab:commitment_accuracy}). When all instances are included, the relative deviation from the optimal commitment stays below $5\%$ across all $n$, with the largest value of $4.8\%$ at $n = 10$ for BSH-\ac{LKH}. Restricting attention to instances in which the optimum is missed, the relative deviation ranges from $20.2\%$ at $n = 6$ to $8.1\%$ at $n = 17$ for BSH-\ac{LKH}.
  
  This broadly decreasing trend is intuitive. First, for small instances the optimal commitment is short, so any deviation carries a larger relative weight. Second, the expected revenue function resembles a step function at small $n$, since the tour lengths of the few customer subsets are widely dispersed. A misplaced commitment therefore lands further from the optimum in relative terms. For larger instances the tour lengths are denser and the revenue function is smoother, so deviations have a smaller relative impact.
  
  Figure~\ref{fig:commitment_scatter_both} presents scatter plots comparing the commitment decisions of both heuristics against the optimal solution for $n \in \{6, 10, 15, 19\}$. Both algorithms cluster tightly around the diagonal across all instance sizes, and the scatter of the two methods is effectively equivalent. Point size encodes the number of coinciding observations. A consistent asymmetry is visible across all $n$. Overestimates are more dispersed and spread across a wider range, whereas underestimates concentrate at a few coordinates. Overestimation dominates in total frequency at $n = 6$ and $n = 19$, while at $n = 10$ and $n = 15$ underestimation is more frequent overall but the overestimated region remains more dispersed.

  In sum, BSH-\ac{LKH} attains near-optimal revenue and commitment decisions at all exactly verifiable sizes while scaling to instances far beyond the reach of the exact algorithms.

 \begin{landscape}
 	\begin{table}
 		\centering
 		\resizebox{\linewidth}{!}{%
 			\small
 			\begin{tabular}{r|rrrr|rrrrr|rrrrr}
 				\toprule
 				& \multicolumn{4}{c|}{\textbf{Optimal}} & \multicolumn{5}{c|}{\textbf{BSH-exact}} & \multicolumn{5}{c}{\textbf{BSH-LKH}} \\
 				$n$ & true rev. & arr.\ comm. & runtime HK & runtime INT & true rev. & report.\ rev. & \makecell{rel.\ gap} & arr.\ comm. & runtime & true rev. & report.\ rev. & \makecell{rel.\ gap} & arr.\ comm. & runtime \\
 				& \makecell{mean \\ (std)} & \makecell{mean \\ (std)} & \makecell{mean (std) \\ {[s]}} & \makecell{mean (std) \\ {[s]}} & \makecell{mean \\ (std)} & \makecell{mean \\ (std)} & abs. \ gap & \makecell{mean \\ (std)} & \makecell{mean (std) \\ {[s]}} & \makecell{mean \\ (std)} & \makecell{mean \\ (std)} & abs. \ gap & \makecell{mean \\ (std)} & \makecell{mean (std) \\ {[s]}} \\
 				\midrule
 				2 & \makecell{1.24\\ (0.23)} & \makecell{21.9\\ (6.1)} & \makecell{0.01\\ (0.00)} & \makecell{0.01\\ (0.00)} & \makecell{1.24\\ (0.23)} & \makecell{1.24\\ (0.23)} & \makecell{0.00\% \\ 0.0000} & \makecell{21.9\\ (6.1)} & \makecell{0.41\\ (0.09)} & \makecell{1.24\\ (0.23)} & \makecell{1.24\\ (0.23)} & \makecell{0.00\% \\ 0.0000} & \makecell{21.9\\ (6.1)} & \makecell{0.31\\ (0.06)} \\
 				4 & \makecell{1.95\\ (0.31)} & \makecell{25.5\\ (5.0)} & \makecell{0.01\\ (0.01)} & \makecell{0.01\\ (0.01)} & \makecell{1.95\\ (0.31)} & \makecell{1.93\\ (0.31)} & \makecell{0.18\% \\ 0.0035} & \makecell{26.0\\ (4.9)} & \makecell{0.40\\ (0.10)} & \makecell{1.95\\ (0.31)} & \makecell{1.92\\ (0.31)} & \makecell{0.25\% \\ 0.0047} & \makecell{25.9\\ (4.9)} & \makecell{0.34\\ (0.08)} \\
 				6 & \makecell{2.59\\ (0.36)} & \makecell{27.8\\ (4.5)} & \makecell{0.03\\ (0.01)} & \makecell{0.02\\ (0.01)} & \makecell{2.59\\ (0.36)} & \makecell{2.55\\ (0.36)} & \makecell{0.15\% \\ 0.0040} & \makecell{28.1\\ (4.3)} & \makecell{0.23\\ (0.08)} & \makecell{2.59\\ (0.36)} & \makecell{2.54\\ (0.36)} & \makecell{0.23\% \\ 0.0059} & \makecell{28.1\\ (4.3)} & \makecell{0.23\\ (0.05)} \\
 				8 & \makecell{3.15\\ (0.42)} & \makecell{29.0\\ (3.6)} & \makecell{0.31\\ (0.09)} & \makecell{0.01\\ (0.00)} & \makecell{3.15\\ (0.42)} & \makecell{3.13\\ (0.42)} & \makecell{0.19\% \\ 0.0055} & \makecell{29.2\\ (3.5)} & \makecell{0.16\\ (0.04)} & \makecell{3.15\\ (0.42)} & \makecell{3.12\\ (0.41)} & \makecell{0.25\% \\ 0.0074} & \makecell{29.2\\ (3.6)} & \makecell{0.34\\ (0.06)} \\
 				10 & \makecell{3.69\\ (0.42)} & \makecell{29.4\\ (3.8)} & \makecell{5.21\\ (0.49)} & \makecell{0.05\\ (0.01)} & \makecell{3.68\\ (0.42)} & \makecell{3.73\\ (0.43)} & \makecell{0.32\% \\ 0.0123} & \makecell{29.8\\ (3.1)} & \makecell{0.20\\ (0.02)} & \makecell{3.68\\ (0.42)} & \makecell{3.72\\ (0.43)} & \makecell{0.29\% \\ 0.0108} & \makecell{29.9\\ (3.1)} & \makecell{1.18\\ (0.25)} \\
 				12 & \makecell{4.25\\ (0.46)} & \makecell{30.1\\ (3.5)} & \makecell{116.96\\ (4.83)} & \makecell{0.33\\ (0.02)} & \makecell{4.24\\ (0.46)} & \makecell{4.37\\ (0.50)} & \makecell{0.21\% \\ 0.0091} & \makecell{30.2\\ (3.0)} & \makecell{0.56\\ (0.03)} & \makecell{4.24\\ (0.46)} & \makecell{4.36\\ (0.49)} & \makecell{0.24\% \\ 0.0103} & \makecell{30.2\\ (2.8)} & \makecell{7.00\\ (1.86)} \\
 				15 & \makecell{4.97\\ (0.52)} & \makecell{31.1\\ (3.0)} & \makecell{10856.06\\ (602.23)} & \makecell{3.87\\ (0.36)} & \makecell{4.96\\ (0.52)} & \makecell{5.23\\ (0.56)} & \makecell{0.21\% \\ 0.0108} & \makecell{31.0\\ (2.7)} & \makecell{8.81\\ (1.06)} & \makecell{4.96\\ (0.51)} & \makecell{5.21\\ (0.55)} & \makecell{0.22\% \\ 0.0115} & \makecell{31.2\\ (2.6)} & \makecell{99.87\\ (19.64)} \\
 				17 & \makecell{5.42\\ (0.56)} & \makecell{31.7\\ (3.2)} & -- & \makecell{20.36\\ (0.40)} & \makecell{5.41\\ (0.55)} & \makecell{5.74\\ (0.61)} & \makecell{0.22\% \\ 0.0126} & \makecell{32.1\\ (2.7)} & \makecell{16.08\\ (7.30)} & \makecell{5.41\\ (0.55)} & \makecell{5.72\\ (0.61)} & \makecell{0.22\% \\ 0.0124} & \makecell{32.0\\ (2.8)} & \makecell{296.42\\ (113.75)} \\
 				19 & \makecell{5.87\\ (0.54)} & \makecell{32.1\\ (2.7)} & -- & \makecell{101.41\\ (3.10)} & \makecell{5.85\\ (0.54)} & \makecell{6.24\\ (0.61)} & \makecell{0.24\% \\ 0.0141} & \makecell{33.0\\ (2.4)} & \makecell{21.26\\ (9.49)} & \makecell{5.85\\ (0.55)} & \makecell{6.21\\ (0.61)} & \makecell{0.24\% \\ 0.0142} & \makecell{33.1\\ (2.5)} & \makecell{365.11\\ (183.55)} \\
 				20 & -- & -- & -- & -- & -- & -- & -- & -- & -- & -- & \makecell{6.46\\ (0.62)} & -- & \makecell{33.4\\ (2.4)} & \makecell{418.44\\ (193.49)} \\
 				50 & -- & -- & -- & -- & -- & -- & -- & -- & -- & -- & \makecell{11.65\\ (0.88)} & -- & \makecell{41.2\\ (1.6)} & \makecell{2827.97\\ (1106.65)} \\
 				100 & -- & -- & -- & -- & -- & -- & -- & -- & -- & -- & \makecell{15.65\\ (0.93)} & -- & \makecell{47.0\\ (1.4)} & \makecell{10762.51\\ (4247.26)} \\
 				\bottomrule
 			\end{tabular}
 		}
 		\caption{Mean runtime, revenue, and arrival commitment by algorithm and instance size (slope $= 60$). The \emph{true} revenue denotes the exact expected revenue attained at the commitment returned by the heuristic. The \emph{reported} revenue is the heuristic's own estimate. The absolute gap is $|\text{optimal} - \text{true}|$ and the relative gap is $|\text{optimal} - \text{true}|/\text{optimal}$, both averaged across instances.}
 		\label{tab:full_results}
 	\end{table}
 \end{landscape}
 
 \begin{table}
 	\centering
 	\begin{tabular}{l ccc ccc}
 		\toprule
 		& \multicolumn{3}{c}{BSH-exact} & \multicolumn{3}{c}{BSH-LKH} \\
 		\cmidrule(lr){2-4} \cmidrule(lr){5-7}
 		$n$ & \% met & rel.\ dev.\ (all) & rel.\ dev.\ ($\neq$ opt) & \% met & rel.\ dev.\ (all) & rel.\ dev.\ ($\neq$ opt) \\
 		\midrule
 		$2$ & 100.0\% & 0.0\% & -- & 100.0\% & 0.0\% & -- \\
 		$4$ & 87.0\% & 2.4\% & 18.2\% & 85.0\% & 2.8\% & 18.5\% \\
 		$6$ & 86.0\% & 3.0\% & 21.5\% & 84.0\% & 3.2\% & 20.2\% \\
 		$8$ & 77.0\% & 2.9\% & 12.6\% & 77.0\% & 3.0\% & 13.0\% \\
 		$10$ & 72.0\% & 4.6\% & 16.3\% & 73.0\% & 4.8\% & 17.8\% \\
 		$12$ & 69.0\% & 3.7\% & 12.0\% & 70.0\% & 3.3\% & 11.1\% \\
 		$15$ & 66.0\% & 3.2\% & 9.5\% & 63.0\% & 3.4\% & 9.1\% \\
 		$17$ & 60.0\% & 3.7\% & 9.3\% & 58.0\% & 3.4\% & 8.1\% \\
 		$19$ & 62.0\% & 3.5\% & 9.3\% & 61.0\% & 3.6\% & 9.2\% \\
 		\bottomrule
 	\end{tabular}
 	\caption{Commitment accuracy of BSH-exact and BSH-LKH (slope $= 60$). Rel.\ dev.\ (all) is the mean
 		relative deviation of the heuristic's commitment from the optimal commitment over all instances;
 		rel.\ dev.\ ($\neq$ opt) restricts the mean to the instances in which the optimum is missed.}
 	\label{tab:commitment_accuracy}
 \end{table}

 \begin{figure}
 	\centering
 	\includegraphics[width=\textwidth]{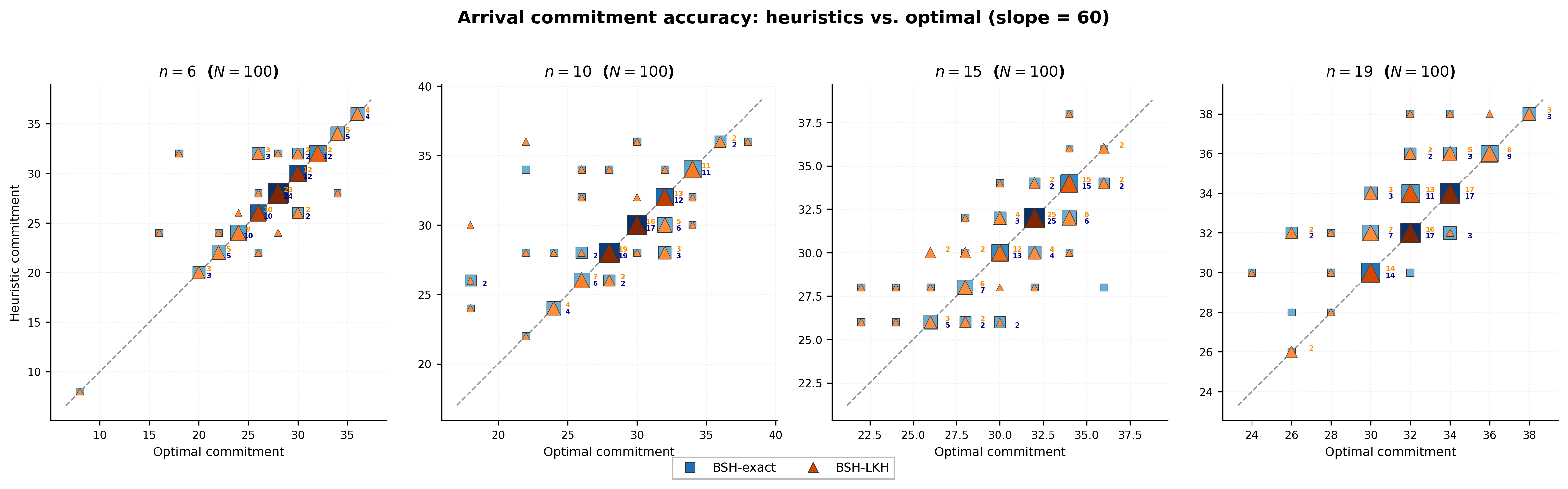}
 	\caption{Commitment decisions of BSH-exact and BSH-\ac{LKH} vs.\ optimal for \textit{small}
 		instances (slope $= 60$). Point size encodes the number of coinciding observations.}
 	\label{fig:commitment_scatter_both}
 \end{figure}

\subsection{Evaluation of Research Question Q2}\label{subsec:ErrorSources}
 Recall that BSH-exact uses exact tour lengths and therefore isolates the sampling error from the \ac{LKH} tour-length error; comparing it with BSH-\ac{LKH} reveals the contribution of the latter. This contribution is small: across all $n$, the true revenues of BSH-exact and BSH-\ac{LKH} are identical to two decimals, their reported revenues differ by at most $0.03$, and their optimality gaps by at most $0.08$ percentage points (Table~\ref{tab:full_results}); their commitment accuracies differ by at most three percentage points (Table~\ref{tab:commitment_accuracy}). The small remaining difference has the expected sign: since \ac{LKH} may return tours longer than optimal, some yes-instances are misclassified as infeasible, so the estimate of BSH-\ac{LKH} tends to lie slightly below that of BSH-exact (Figure~\ref{fig:revenue_function_instance}). The \ac{LKH} tour-length approximation therefore contributes negligibly to the overall error, which is instead dominated by the sampling-based \textsc{PartialRev} estimation.

 The estimation accuracy itself is high: the reported revenue closely tracks the true revenue for the smallest instances, where the sampling enumerates all customer subsets exhaustively. As $n$ grows, the reported revenue moderately overestimates the true revenue, reaching $6.21$ against a true value of $5.85$ at $n = 19$ for BSH-\ac{LKH}, consistent with the \textsc{PartialRev} overestimation discussed in Section~\ref{sec:estimation_error}. This bias remains moderate and does not translate into poor commitment decisions, as the optimality gaps in Table~\ref{tab:full_results} confirm.

 We now examine this dominant error, considering the expected revenue function for a single instance with $n = 15$ customers (Figure~\ref{fig:revenue_function_instance}) and averaged across all instances with $n = 15$ customers (Figure~\ref{fig:avg_revenue_function}). Both reveal a systematic over- and underestimation pattern: BSH \emph{underestimates} the expected revenue for shorter arrival commitments while \emph{overestimating} it for longer ones.
 
 This crossover is the signature of the two opposing approximations in \textsc{PartialRev} (Section~\ref{sec:estimation_error}). For short commitments, almost every subset is a no-instance, so nearly all estimated revenue flows through \textsc{PartialRev}, where the conservative overlap adjustment dominates and depresses the estimate. As the commitment grows, the optimistic independence assumption takes over and inflates the partial-service revenue assigned to the remaining no-instances, until enough subsets become genuine yes-instances and the effect fades.
 
 The crossover point in Figure~\ref{fig:revenue_function} marks where the two \textsc{PartialRev} biases offset one another; the instance-level and averaged curves are nearly indistinguishable between BSH-exact and BSH-\ac{LKH}.

 In sum, the heuristic's error stems almost entirely from the sampling-based estimation, whose two opposing approximations produce a predictable underestimation for short commitments and overestimation for long ones, while the tour-length approximation is negligible at every scale.

 \begin{figure}
 	\centering
 	
 	\begin{subfigure}[b]{0.48\textwidth}
 		\centering
 		\includegraphics[width=\textwidth]{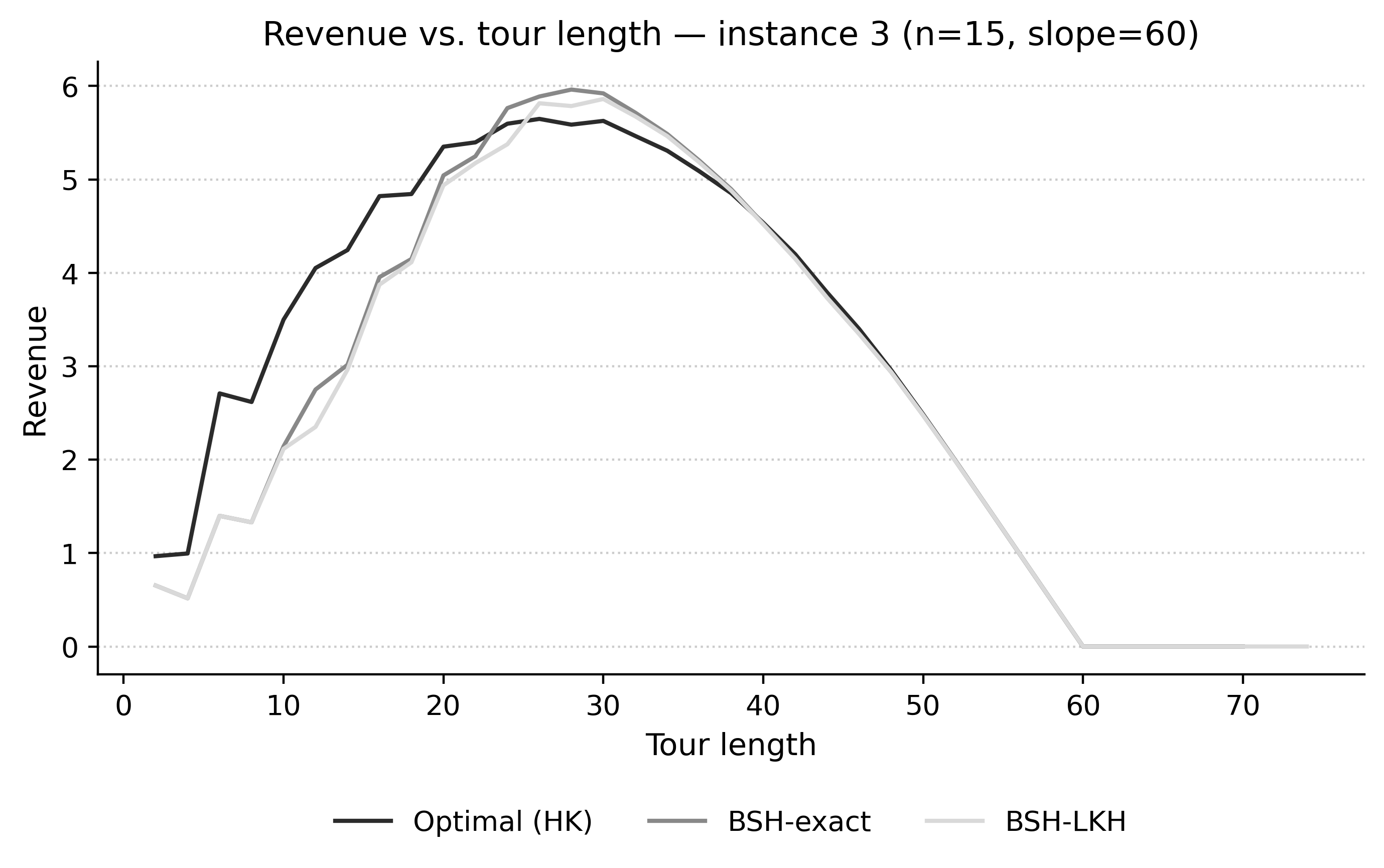}
 		\caption{Revenue functions for one instance.}
 		\label{fig:revenue_function_instance}
 	\end{subfigure}
 	\hfill
 	\begin{subfigure}[b]{0.48\textwidth}
 		\centering
 		\includegraphics[width=\textwidth]{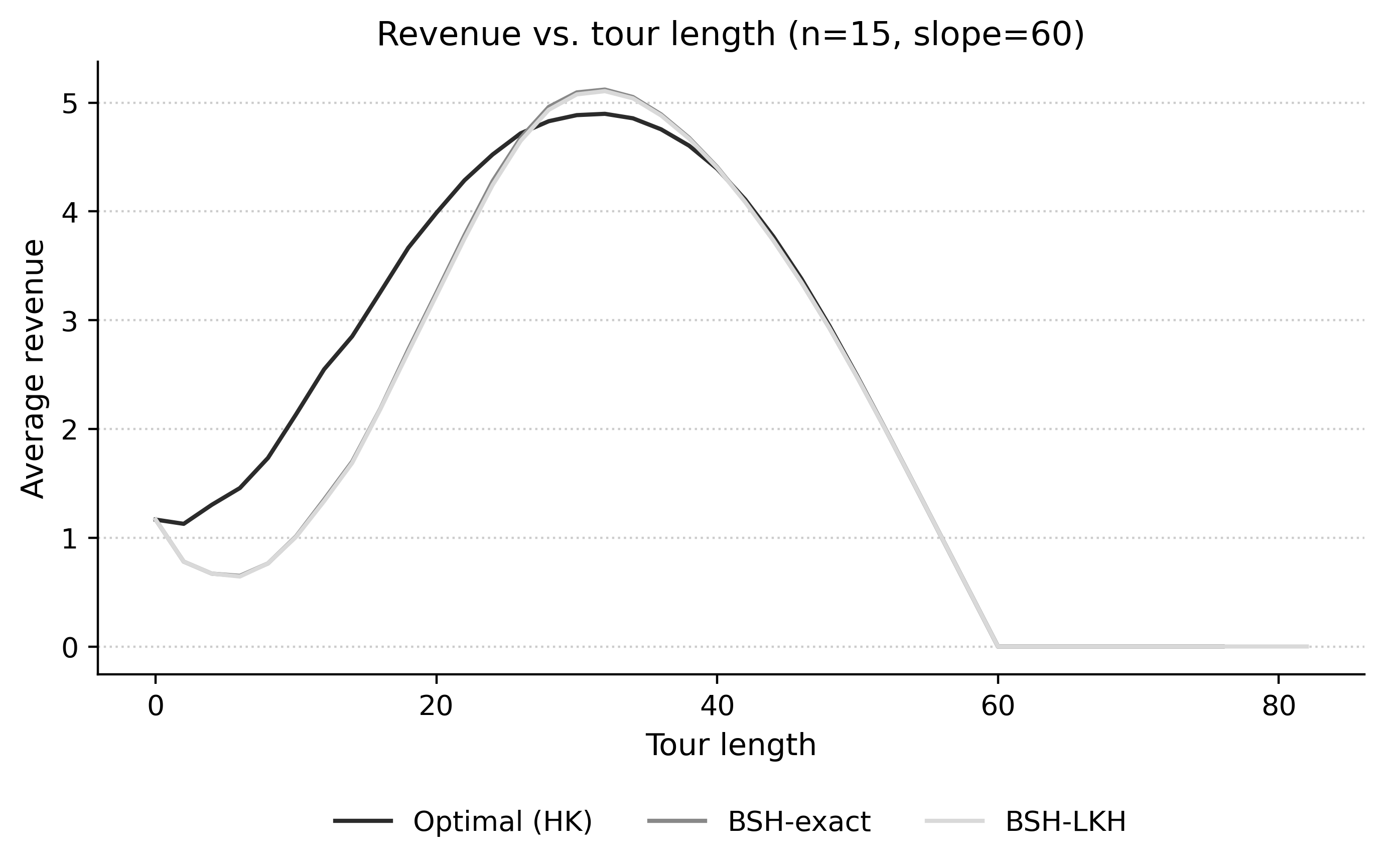}
 		\caption{Average revenue functions across all instances.}
 		\label{fig:avg_revenue_function}
 	\end{subfigure}
 	
 	\caption{True expected revenue (Optimal) and the reported revenue estimates of BSH-exact and BSH-\ac{LKH} as functions of the arrival commitment ($n = 15$, slope $= 60$).}
 	\label{fig:revenue_function}
 \end{figure}

\subsection{Evaluation of Research Question Q3}\label{subsec:Managerial}

 Table~\ref{tab:managerial_insights_summary} reports the operational quantities at the revenue-maximizing commitment by instance size. The commitment-tour ratio is the revenue-maximizing arrival commitment $x^*$ divided by $\tau(A)$, the length of the tour that serves \emph{all} customers. It measures what fraction of this full-service tour the committed time window covers: a ratio near one means the provider commits to nearly enough time to serve every customer, whereas a small ratio means the commitment covers only part of the full-service tour, so many accepting customers must be rejected. The ratio decreases from approximately $0.97$ at $n = 2$ to $0.30$ at $n = 100$: as the customer base grows, the revenue-maximizing strategy becomes increasingly conservative relative to what serving everyone would require. This is because the full-service tour length grows with $n$, whereas the commitment cannot usefully exceed the slope value of $60$, beyond which no customer accepts.
 
 The expected revenue $R^*$ alone does not reveal \emph{how} it is achieved: the same $R^*$ can arise from a conservative commitment that attracts few customers but serves most of them, or from an ambitious commitment that attracts many but serves only a fraction. To distinguish these regimes at the revenue-maximizing commitment $x^*$, we decompose the outcome into three quantities. First, we characterize customers' acceptance behavior. Second, we quantify the share of the customer population that the provider serves within the commitment (\emph{normalized revenue}). Third, we introduce the \emph{service rate}, which measures the probability that a customer is served conditional on having accepted the offer.
 
 The first two quantities are defined over the entire customer population. The \emph{customer acceptance probability} $p(x^*)$ is the probability that a random customer accepts the offer at $x^*$. The \emph{normalized revenue} is the probability that a random customer is served within the commitment, i.e., $R^*/n$. Both decline as $n$ grows. At $n = 2$ they nearly coincide, at $64\%$ and $62\%$, so system-side rejections are negligible. At $n = 100$, they diverge to $21.6\%$ and $15.7\%$. The widening gap reflects customers who accept the offer but are subsequently rejected by the system.
 
 We introduce the \emph{service rate} as the probability that a customer is served given that the customer accepted. Because a customer can only be served after accepting, this conditional probability equals the probability of being served divided by the probability of accepting---that is, the normalized revenue divided by the customer acceptance probability. The service rate is therefore the ratio of the two quantities reported in Table~\ref{tab:managerial_insights_summary}.
 
 The true service rate declines steadily, from $98\%$ at $n = 2$ to roughly $69\%$ by $n = 15$--$19$. The decline reflects the growing operational difficulty of serving all accepting customers as the customer base expands: the commitment covers a progressively smaller fraction of the full-service tour (cf.\ the declining commitment-tour ratio), so the provider must reject an increasing share of accepting customers to keep the commitment.
 
 From $n = 15$ onward, however, the revenue-maximizing policy settles into a stable regime in which about two thirds of accepting customers are served and roughly one in three is turned away. For the larger instances only the BSH-\ac{LKH} estimate is available; it suggests that this regime persists, although the \textsc{PartialRev} overestimation (Section~\ref{subsec:ErrorSources}) inflates the reported service rate, so the true value is likely a few points below the reported $72$--$74\%$. The stability itself is notable. As $n$ grows, both the customer acceptance probability and the normalized revenue continue to decline, yet the ratio between them converges. The provider attracts a smaller share of the population and serves a smaller share of the population, but it does so in consistent proportion. From the customer's perspective, the probability of being rejected after accepting remains roughly constant, largely independent of how large the customer base is. A provider who quotes the revenue-maximizing commitment must, in expectation, turn away roughly one in three customers who accepted the offer. This has implications beyond the single interaction modeled here. Repeated rejections after acceptance may erode customer trust and reduce future willingness to accept, an effect our model does not capture. A provider concerned with service reliability may therefore prefer a commitment slightly more conservative than $x^*$, accepting a modest reduction in expected revenue in exchange for a lower and more predictable rejection rate.
 
 
 Figure~\ref{fig:acceptance_probas_tour_lengths} makes the central managerial tradeoff
 explicit for $n = 15$ customers: the normalized expected revenue and the service rate peak
 at different commitments. The normalized revenue attains its maximum at the revenue-maximizing
 commitment $x^* = 32$. At $x^*$ it reaches approximately $33~\%$, but at
 that same commitment the service rate is only about $70~\%$ (the value in Table~\ref{tab:managerial_insights_summary}
 differs by roughly a point, reflecting the different averaging procedure discussed there). The service rate does not peak at $x^*$. It continues to climb as the commitment is
 extended, approaching $100\%$ once the committed time is long enough to serve essentially every
 accepting customer, precisely the region in which revenue has already fallen well below its
 maximum.
 
 Table~\ref{tab:managerial_insights_summary} and Figure~\ref{fig:acceptance_probas_tour_lengths}
 report related but distinct quantities, though both rest on the same exact revenue function
 $R(x)$. The table evaluates each instance's exact revenue at the commitment BSH-\ac{LKH}
 itself selects, then averages this outcome across instances. The figure instead averages
 the exact revenue curve across instances on a common grid and identifies the commitment
 $x^*$ that maximizes this averaged curve. The two therefore differ only in which $x$ is
 evaluated: the table reflects the heuristic's realized commitment choice, while the figure
 reflects the revenue-maximizing policy under exact optimization. This explains why the
 values do not coincide exactly.
 
 Although our objective includes no rejection penalty, the figure makes the tension such a
 penalty would capture explicit: the service rate rises above its value at $x^*$
 only by forgoing revenue. At the optimal commitment, the normalized revenue reaches
 $32.7~\%$ at a service rate of $70.0~\%$. The marginal cost of improving reliability beyond this point is markedly
 non-linear. Raising the service rate to $80~\%$ requires a revenue reduction of only
 $3.4~\%$. Reaching a service rate of $90~\%$ is considerably more costly, requiring a reduction
 of $13.9~\%$, and $95~\%$ requires foregoing roughly a quarter of the optimal
 revenue ($25.6~\%$). Beyond this point the tradeoff deteriorates sharply: eliminating
 nearly all rejections, a service rate of $99.2~\%$, requires a revenue reduction of
 $49.3~\%$, giving up nearly half of what was achievable at the revenue-maximizing commitment. These results indicate a natural operating
 region for providers who place additional weight on service reliability. The service rate can
 be raised substantially above its revenue-maximizing level at comparatively low cost up
 to approximately $80$--$90\%$, whereas pursuing a near-perfect service rate entails a
 disproportionate revenue sacrifice and is likely justified only when rejection carries a
 severe cost, for instance through reputational damage or regulatory requirements.
 
\begin{table}
	\centering
	\small
	\begin{tabular}{r r r rr rr}
		\toprule
		& & & \multicolumn{2}{c}{normalized revenue} & \multicolumn{2}{c}{service rate} \\
		\cmidrule(lr){4-5} \cmidrule(lr){6-7}
		customers & commitment/tour & cust.\ acceptance & reported & true & reported & true \\
		\midrule
		2 & 0.9724 & 0.6353 & 0.6219 & 0.6219 & 0.9788 & 0.9788 \\
		4 & 0.7602 & 0.5677 & 0.4809 & 0.4869 & 0.8472 & 0.8577 \\
		6 & 0.6677 & 0.5320 & 0.4240 & 0.4311 & 0.7970 & 0.8104 \\
		8 & 0.5890 & 0.5137 & 0.3905 & 0.3933 & 0.7603 & 0.7657 \\
		10 & 0.5430 & 0.5020 & 0.3719 & 0.3683 & 0.7408 & 0.7337 \\
		12 & 0.4966 & 0.4967 & 0.3631 & 0.3531 & 0.7310 & 0.7109 \\
		15 & 0.4571 & 0.4793 & 0.3471 & 0.3305 & 0.7241 & 0.6895 \\
		17 & 0.4404 & 0.4667 & 0.3363 & 0.3183 & 0.7206 & 0.6820 \\
		19 & 0.4369 & 0.4477 & 0.3269 & 0.3081 & 0.7301 & 0.6882 \\
		20 & 0.4318 & 0.4440 & 0.3231 & -- & 0.7277 & -- \\
		50 & 0.3659 & 0.3130 & 0.2330 & -- & 0.7443 & -- \\
		100 & 0.2956 & 0.2160 & 0.1565 & -- & 0.7245 & -- \\
		\bottomrule
	\end{tabular}
	\caption{Commitment-tour ratio, customer acceptance probability, normalized revenue, and service rate by instance size (slope $= 60$), evaluated at the commitment selected by BSH-\ac{LKH}. \emph{Reported} values of the normalized revenue and the service rate are based on the heuristic's revenue estimate, \emph{true} values on the exact expected revenue, which is computable only for the small instances ($n \leq 19$); a dash marks unavailable values.}
	\label{tab:managerial_insights_summary}
\end{table}
  
 \begin{figure}
 	\centering
 	\includegraphics[width=0.6\textwidth]{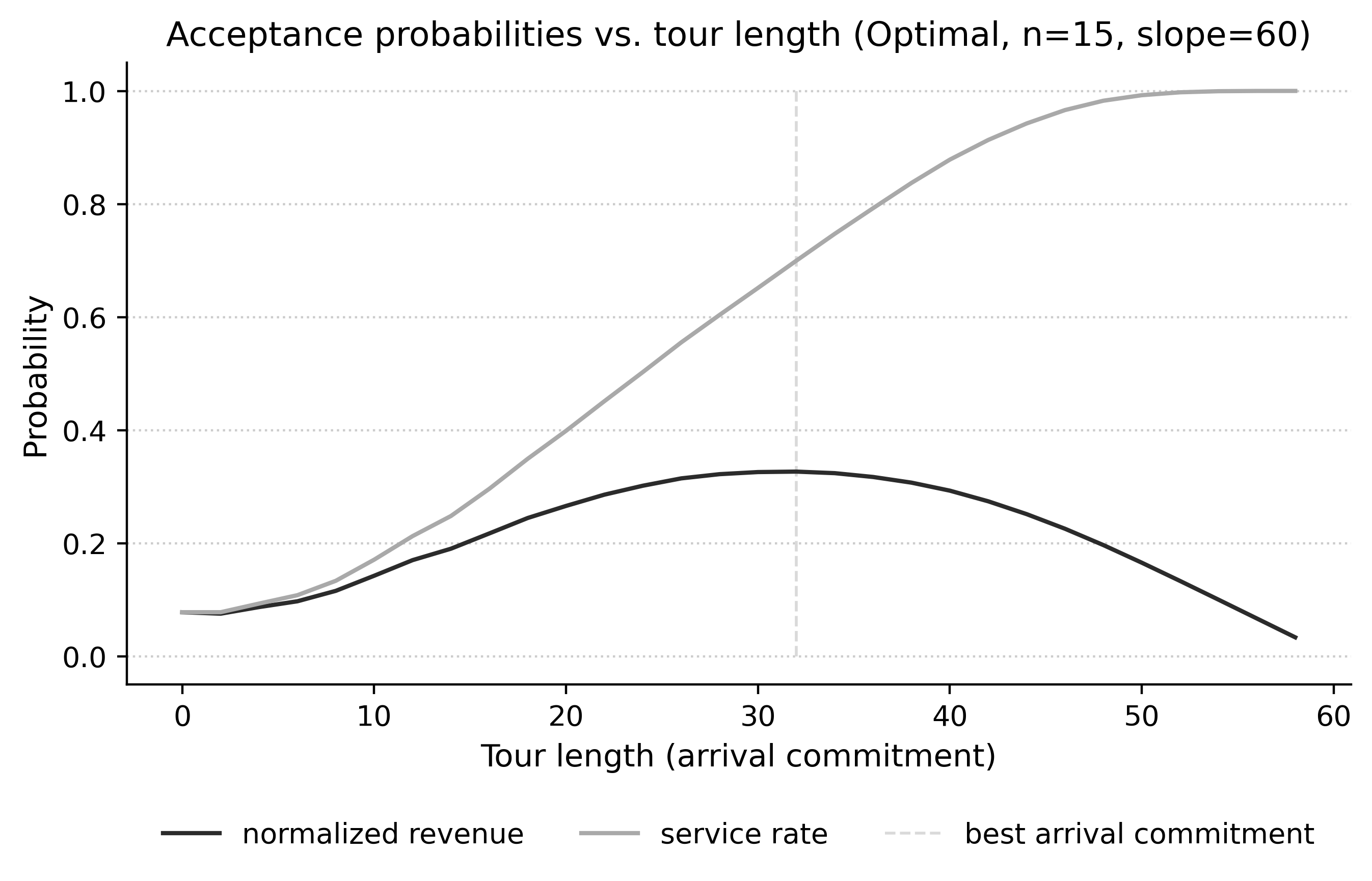}
 	\caption{Normalized expected revenue $R(x)/n$ (dark) and service rate $\mathbb{P}(\text{served}\mid\text{accepted})$ (gray) as functions of the arrival commitment, averaged over all instances with $n = 15$ (slope $= 60$, BSH-\ac{LKH}, reported revenue). The vertical dashed line marks the revenue-maximizing commitment $x^*_\text{LKH}$ of BSH-\ac{LKH}.}
 	\label{fig:acceptance_probas_tour_lengths}
 \end{figure}
 
 Figure~\ref{fig:spatial_distribution} examines two instances with $n = 12$ customers at the same optimal arrival commitment of $x^* = 32$. In each panel, the left plot shows the spatial distribution of customers, where the bubble size reflects how frequently a customer is included in a served subset; the right plot shows the number of subsets of each size that can be served within the commitment, relative to the total number of subsets of that size.
 
 Both instances share the same acceptance probability and subset-size distribution at $x^* = 32$. However, their spatial configurations differ. In the first instance, customers are more clustered but several are located far from the depot. The served subsets are concentrated at smaller subset sizes, and customers far from the depot are rarely included. In the second instance, customers are more evenly distributed and generally closer to the depot. As a result, the served share increases across all subset sizes, and the spatial discrimination of outer customers is less pronounced. Despite the identical arrival commitment and acceptance probability, the attainable revenue differs ($3.91$ vs.\ $4.14$), a gap attributable to the spatial configuration.
 
 \begin{figure}
 	\centering
 	\begin{subfigure}[h]{\textwidth}
 		\centering
 		\includegraphics[width=\textwidth]{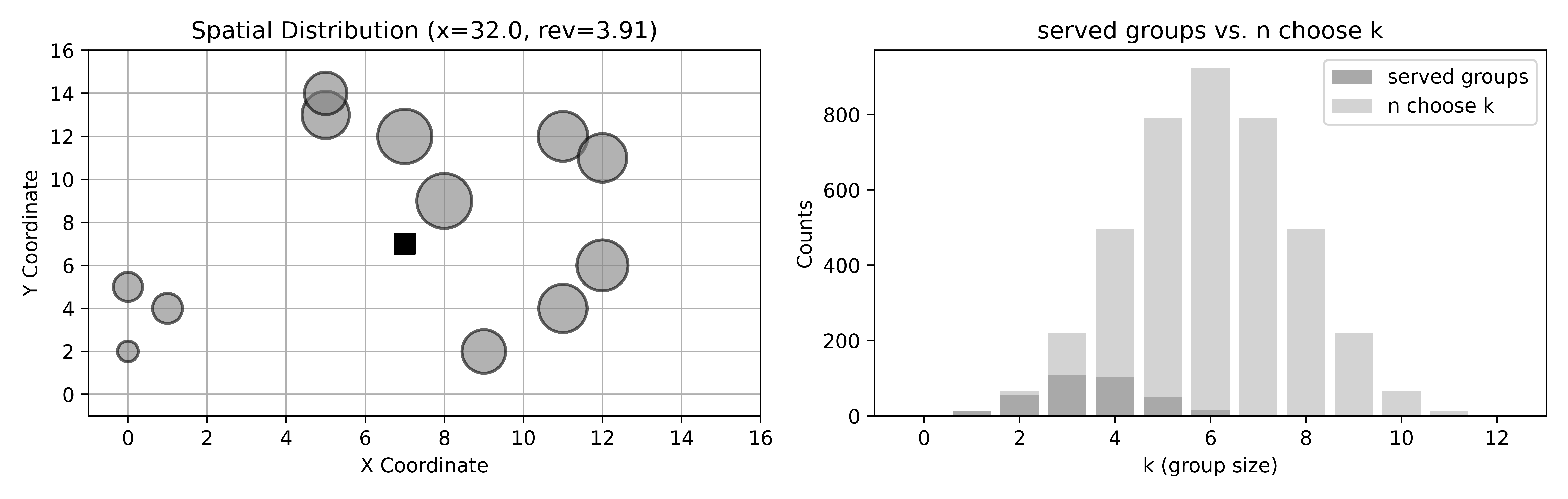}
 		\caption{First instance.}
 	\end{subfigure}
 	\hfill
 	\begin{subfigure}[h]{\textwidth}
 		\centering
 		\includegraphics[width=\textwidth]{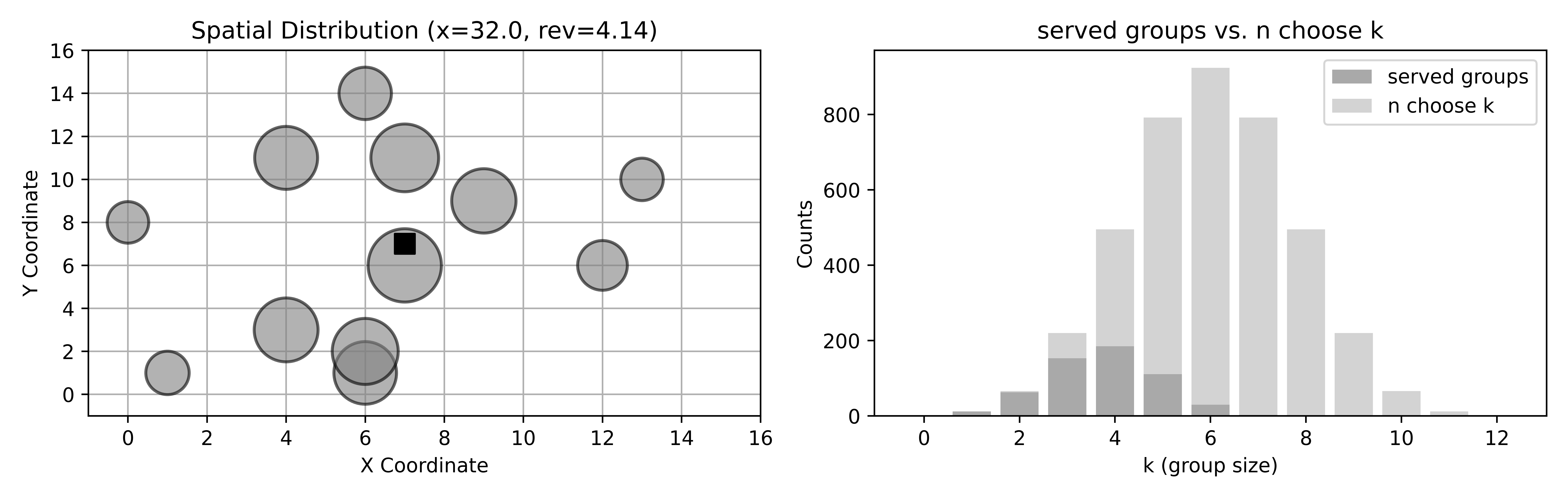}
 		\caption{Second instance.}
 	\end{subfigure}
 	\caption{Spatial distribution of two instances with $n = 12$ customers at the optimal arrival commitment $x^* = 32$. Bubble size reflects how frequently a customer is included in a served subset.}
 	\label{fig:spatial_distribution}
 \end{figure}	
 
\section{Conclusions}
	\label{sec:conc}

	We introduced the \ac{TSPSC}, a two-stage stochastic optimization problem that captures the endogenous interplay between a provider's arrival commitment and stochastic customer demand: a single announced commitment simultaneously shapes each customer's acceptance probability and constrains the routing problem that serves the accepting customers. We showed that an optimal commitment always lies in the finite set of subset tour lengths, and that evaluating the expected revenue of a given commitment is $\textsf{\#P}$-hard for general travel times and customer-specific revenues and acceptance probabilities. Exploiting this discretization, we developed two exact algorithms, a brute-force method with runtime $O(n\,2^{2n})$ and an integrated method with runtime $O(n^2 2^n)$ for homogeneous customers, and, for larger instances, an Adaptive Binomial Sampling Heuristic (BSH).

	Our computational study showed that the heuristic is near-optimal, with a mean relative optimality gap of at most $0.32\%$ across all exactly solvable instance sizes, while scaling to instances with up to 100 customers. It also revealed a robust operational regularity: as the customer base grows, the revenue-maximizing policy settles into a regime in which the provider turns away roughly one in three accepting customers, a rejection rate that then remains stable across instance sizes. A provider who values service reliability beyond its direct revenue contribution can trade a modest amount of expected revenue for a substantially higher service rate by committing to a slightly later arrival time; pushing beyond roughly 80--90\% service, however, becomes disproportionately costly.

	Several directions remain open for future research. On the modeling side, our setting assumes a single vehicle with non-binding capacity, a common destination, and a static one-shot interaction; moreover, the faster exact algorithm, the heuristic, and our experiments assume homogeneous customers. Capacitated and multi-vehicle variants, algorithmic support for heterogeneous customers, and dynamic settings with sequentially arriving requests and personalized commitments are natural extensions. Incorporating an explicit rejection penalty or a service-level constraint would internalize the reliability--revenue tradeoff that our experiments quantify, and jointly optimizing prices and commitments would connect the problem more closely to revenue management. On the theoretical side, the complexity of the first-stage problem remains open, as does whether evaluating the expected revenue stays $\textsf{\#P}$-hard under metric travel times and homogeneous customers. Finally, a formal convergence analysis of the adaptive sampling heuristic, in particular under the re-allocation of the sampling budget across iterations, might yield interesting structural insights.


	\ACKNOWLEDGMENT{For this work, the HPC cluster Hummel-2 at the University of Hamburg was used. The cluster was funded by the Deutsche Forschungsgemeinschaft (DFG, German Research Foundation) -- 498394658.}

		\bibliography{lit}

	\clearpage
	\section*{Appendix A. Parameter Sensitivity: Revenue Across Slope Values}
\setcounter{subsection}{0}
\renewcommand{\thesubsection}{A.\arabic{subsection}}
To assess whether our findings depend on the baseline acceptance function, we vary the slope parameter from 15 to 120 in steps of 15, corresponding to maximum
acceptance windows ranging from 15 minutes to 2 hours. Since BSH-\ac{LKH} demonstrated
strong performance in Section~\ref{subsec:Performance}, the cross-slope analysis is
restricted to this variant. For small instances ($n \leq 19$), BSH-\ac{LKH} results are
compared against the optimal. For large instances ($n \in \{20, 50, 100\}$), only BSH-\ac{LKH}
results are available. All BSH-\ac{LKH} values in this appendix are reported values in
the sense of Section~\ref{subsec:ErrorSources}. In all tables, a dash indicates that
BSH-\ac{LKH} did not terminate within the 12-hour wall-time limit, which occurred only for
$n = 100$ at slope $= 120$.

Two findings emerge from the sensitivity analysis. First, every quantity except the
customer acceptance probability varies monotonically with the slope, and all through the
same mechanism: a more lenient acceptance function makes a later arrival commitment
revenue-optimal, so a larger share of the accepting customers is served within the
committed time. We state this mechanism once here and refer back to it below. Second, the
discrepancies between BSH-\ac{LKH} and the optimal solution trace back to the non-monotone
bias in the arrival commitment analyzed in Section~\ref{sec:estimation_error}: the
commitment is overestimated at low slope values, most accurate near the baseline slope of
60, and underestimated at high slope values for the larger instances, so the downstream
quantities are also most accurate near the baseline.

\subsection{Expected Revenue}
As shown in Table~\ref{tab:revenue_slope}, expected revenue increases
monotonically with the slope across all instance sizes, driven by the mechanism stated
above. Revenue is also higher for larger customer sets, which is intuitive given that
more potential demand is available to be served. The dash at $n = 100$, slope $= 120$
reflects the computational cost of repeated \ac{LKH} calls for the larger customer
subsets that predominate at high slope values rather than a fundamental algorithmic
limitation.

A downward bias in BSH-\ac{LKH} revenue relative to the optimal is observed at slope
$= 15$ for every instance size and at slope $= 30$ for $n \leq 12$. In this regime,
the revenue-maximizing commitment $x^*$ is short, rendering the
vast majority of customer subsets infeasible, so nearly all revenue must be estimated
through the \textsc{PartialRev} procedure. As discussed in Section~\ref{sec:estimation_error},
\textsc{PartialRev} underestimates the partial-service revenue of no-instances when the
served shares $\hat{f}_k(x)$ are small, which is precisely the regime induced by a short
commitment. This underestimation is the primary source of the downward bias, and
diminishes as the slope increases and a larger fraction of customer subsets become
yes-instances.

\begin{table}[t]
\centering
\small
\begin{tabular}{c l rrrrrrrr}
\toprule
& & \multicolumn{8}{c}{Slope} \\
\cmidrule(lr){3-10}
$n$ & Algorithm & 15 & 30 & 45 & 60 & 75 & 90 & 105 & 120 \\
\midrule
\multirow{2}{*}{4} & Optimal & 0.54 & 1.14 & 1.59 & 1.95 & 2.24 & 2.47 & 2.67 & 2.83 \\
 & BSH-LKH & 0.48 & 1.09 & 1.55 & 1.92 & 2.21 & 2.46 & 2.66 & 2.82 \\
\midrule
\multirow{2}{*}{8} & Optimal & 0.81 & 1.71 & 2.49 & 3.15 & 3.70 & 4.17 & 4.56 & 4.89 \\
 & BSH-LKH & 0.62 & 1.63 & 2.46 & 3.12 & 3.66 & 4.12 & 4.51 & 4.85 \\
\midrule
\multirow{2}{*}{10} & Optimal & 0.92 & 1.97 & 2.92 & 3.69 & 4.37 & 4.97 & 5.46 & 5.89 \\
 & BSH-LKH & 0.72 & 1.91 & 2.93 & 3.72 & 4.39 & 4.96 & 5.44 & 5.85 \\
\midrule
\multirow{2}{*}{12} & Optimal & 1.04 & 2.23 & 3.33 & 4.25 & 5.04 & 5.72 & 6.31 & 6.82 \\
 & BSH-LKH & 0.81 & 2.20 & 3.40 & 4.36 & 5.13 & 5.78 & 6.33 & 6.81 \\
\midrule
\multirow{2}{*}{15} & Optimal & 1.18 & 2.54 & 3.86 & 4.97 & 5.94 & 6.79 & 7.53 & 8.16 \\
 & BSH-LKH & 0.91 & 2.57 & 4.03 & 5.21 & 6.15 & 6.96 & 7.65 & 8.24 \\
\midrule
20 & BSH-LKH & 0.98 & 3.04 & 4.91 & 6.46 & 7.75 & 8.82 & 9.74 & 10.53 \\
\midrule
50 & BSH-LKH & 1.10 & 4.61 & 8.20 & 11.65 & 14.71 & 17.49 & 19.86 & 21.95 \\
\midrule
100 & BSH-LKH & 1.30 & 5.33 & 10.41 & 15.65 & 20.74 & 25.64 & 30.13 & -- \\
\bottomrule
\end{tabular}
\caption{Mean expected revenue by instance size and slope, optimal vs.\ BSH-\ac{LKH}.}
\label{tab:revenue_slope}
\end{table}

\subsection{Arrival Commitment}
Table~\ref{tab:commitment_slope} shows the mean arrival commitment for different slopes. 
The commitment increases monotonically with the slope, as a higher slope
extends the range over which customers accept the offer, making later commitments
revenue-optimal.

BSH-\ac{LKH} overestimates the arrival commitment at low slope values, achieves closest agreement 
with the optimal near slope $= 60$, and, for $n \geq 10$, transitions to underestimation at high slope values. 
Two competing error sources inherent to the heuristic explain this non-monotone bias
pattern. At low slope values, the revenue-maximizing commitment is short, corresponding to
small customer subsets for which \ac{LKH} computes near-optimal tour lengths. The dominant
source of error in this regime is therefore not the tour-length oracle but the
underestimation of expected revenue caused by \textsc{PartialRev}
(see Table~\ref{tab:revenue_slope}). What matters for the commitment decision is not
how large this underestimation is, but how it changes with $x$. As noted above, the \textsc{PartialRev} bias is most severe for short commitments and
shrinks as $x$ grows. Around a short optimum $x^*$, the estimated revenue thus continues
to rise with $x$ because its downward bias shrinks faster than the true revenue falls.
The estimated revenue therefore peaks to the right of $x^*$, and BSH-\ac{LKH} selects a
commitment that exceeds $x^*$. 

At high slope values, BSH-\ac{LKH} tends to underestimate the optimal arrival commitment,
particularly for larger instance sizes. The
root cause is again a bias in the estimated marginal gain of extending the commitment, but
here it works in the opposite direction. In this regime, the revenue-maximizing commitment
$x^*$ is long, corresponding to larger customer subsets for which \ac{LKH} tends to
overestimate tour lengths. Some subsets that truly fit within the commitment are therefore 
classified as infeasible. Extending the commitment thus reveals fewer new yes-instances in
BSH-\ac{LKH}'s view than in reality, so the estimated marginal gain from a later commitment
is smaller than the true one, while the drop in acceptance probability is unchanged. 
The estimated revenue curve consequently stops climbing too early and peaks
at $x^*_\text{LKH} < x^*$: BSH-\ac{LKH} selects a commitment that falls short of the
true optimum.

\begin{table}[t]
\centering
\small
\begin{tabular}{c l rrrrrrrr}
\toprule
& & \multicolumn{8}{c}{Slope} \\
\cmidrule(lr){3-10}
$n$ & Algorithm & 15 & 30 & 45 & 60 & 75 & 90 & 105 & 120 \\
\midrule
\multirow{2}{*}{4} & Optimal & 8.9 & 15.2 & 20.9 & 25.5 & 28.4 & 32.0 & 33.8 & 35.0 \\
 & BSH-LKH & 9.3 & 16.0 & 21.3 & 25.9 & 29.9 & 32.6 & 34.6 & 35.1 \\
\midrule
\multirow{2}{*}{8} & Optimal & 7.9 & 17.3 & 23.7 & 29.0 & 33.3 & 36.8 & 39.7 & 41.9 \\
 & BSH-LKH & 9.7 & 18.7 & 24.5 & 29.2 & 33.0 & 37.0 & 40.5 & 43.1 \\
\midrule
\multirow{2}{*}{10} & Optimal & 8.2 & 17.1 & 23.9 & 29.4 & 35.0 & 39.2 & 41.7 & 44.6 \\
 & BSH-LKH & 10.3 & 19.0 & 24.8 & 29.9 & 34.1 & 38.4 & 40.9 & 43.9 \\
\midrule
\multirow{2}{*}{12} & Optimal & 8.0 & 17.8 & 24.2 & 30.1 & 35.2 & 39.8 & 43.3 & 46.1 \\
 & BSH-LKH & 10.6 & 19.4 & 25.3 & 30.2 & 34.4 & 38.0 & 41.5 & 44.3 \\
\midrule
\multirow{2}{*}{15} & Optimal & 8.2 & 17.9 & 25.2 & 31.1 & 36.6 & 41.4 & 45.2 & 48.3 \\
 & BSH-LKH & 11.1 & 19.9 & 26.1 & 31.2 & 35.6 & 39.4 & 43.0 & 46.0 \\
\midrule
20 & BSH-LKH & 12.0 & 20.9 & 27.7 & 33.4 & 37.7 & 41.9 & 45.3 & 48.4 \\
\midrule
50 & BSH-LKH & 12.6 & 23.7 & 33.3 & 41.2 & 47.6 & 52.8 & 57.3 & 61.0 \\
\midrule
100 & BSH-LKH & 13.6 & 25.6 & 37.1 & 47.0 & 55.5 & 62.8 & 69.1 & -- \\
\bottomrule
\end{tabular}
\caption{Mean arrival commitment by instance size and slope, optimal vs.\ BSH-\ac{LKH}.}
\label{tab:commitment_slope}
\end{table}

\subsection{Commitment-Tour Ratio}
Table~\ref{tab:comm_tour_slope_grid} reports the mean commitment-tour ratio as a
function of the slope. The ratio increases monotonically with the slope across all
instance sizes: the later commitments that become revenue-optimal at higher slopes
cover a progressively larger fraction of the full-service tour. The deviation of
BSH-\ac{LKH} from the optimal broadly follows the arrival commitment bias described
above: the ratio tends to be overestimated at low slope values and underestimated at
high slope values, with the discrepancy widening with $n$. For BSH-\ac{LKH}, the
full-service tour length in the denominator is itself computed from the LKH tour,
so its tendency to overestimate tour length inflates the denominator and can depress
the ratio even when the commitment (see Table~\ref{tab:commitment_slope}) in the 
numerator is overestimated.


\begin{table}[t]
\centering
\small
\begin{tabular}{c l rrrrrrrr}
	\toprule
	& & \multicolumn{8}{c}{Slope} \\
	\cmidrule(lr){3-10}
	$n$ & Algorithm & 15 & 30 & 45 & 60 & 75 & 90 & 105 & 120 \\
	\midrule
	\multirow{2}{*}{4} & Optimal & 0.260 & 0.444 & 0.623 & 0.751 & 0.833 & 0.928 & 0.972 & 0.997 \\
	& BSH-LKH & 0.273 & 0.472 & 0.633 & 0.760 & 0.870 & 0.939 & 0.984 & 0.996 \\
	\midrule
	\multirow{2}{*}{8} & Optimal & 0.163 & 0.356 & 0.488 & 0.598 & 0.683 & 0.755 & 0.814 & 0.858 \\
	& BSH-LKH & 0.198 & 0.378 & 0.494 & 0.589 & 0.665 & 0.744 & 0.816 & 0.865 \\
	\midrule
	\multirow{2}{*}{10} & Optimal & 0.156 & 0.325 & 0.452 & 0.556 & 0.661 & 0.741 & 0.787 & 0.841 \\
	& BSH-LKH & 0.189 & 0.346 & 0.451 & 0.543 & 0.621 & 0.699 & 0.743 & 0.797 \\
	\midrule
	\multirow{2}{*}{12} & Optimal & 0.140 & 0.310 & 0.419 & 0.522 & 0.611 & 0.691 & 0.750 & 0.798 \\
	& BSH-LKH & 0.175 & 0.319 & 0.415 & 0.497 & 0.565 & 0.625 & 0.683 & 0.727 \\
	\midrule
	\multirow{2}{*}{15} & Optimal & 0.129 & 0.281 & 0.396 & 0.489 & 0.576 & 0.652 & 0.711 & 0.759 \\
	& BSH-LKH & 0.164 & 0.291 & 0.382 & 0.457 & 0.521 & 0.577 & 0.629 & 0.672 \\
	\midrule
	\multirow{1}{*}{20} & BSH-LKH & 0.156 & 0.272 & 0.361 & 0.432 & 0.487 & 0.538 & 0.580 & 0.616 \\
	\midrule
	\multirow{1}{*}{50} & BSH-LKH & 0.113 & 0.211 & 0.295 & 0.366 & 0.423 & 0.469 & 0.510 & 0.544 \\
	\midrule
	\multirow{1}{*}{100} & BSH-LKH & 0.086 & 0.161 & 0.233 & 0.296 & 0.350 & 0.394 & 0.435 & -- \\
	\bottomrule
\end{tabular}
\caption{Mean commitment-tour ratio by instance size and slope, optimal vs.\ BSH-\ac{LKH}.}
\label{tab:comm_tour_slope_grid}
\end{table}

\subsection{Customer Acceptance Probability}
Table~\ref{tab:cust_acc_slope_grid} reports the mean customer acceptance probability at
the revenue-maximizing commitment, and Figure~\ref{fig:acceptanceProbasSlope} plots it as
a function of the slope for the small instances (Figure~\ref{fig:acceptanceProbasSlopeSmall};
optimal solid, BSH-\ac{LKH} dashed) and the large instances
(Figure~\ref{fig:acceptanceProbasSlopeLarge}). For the instances with $n \geq 8$ for which
the optimum is available, the optimal acceptance probability is non-monotone: it declines
from slope $= 15$ to slope $= 30$ before increasing monotonically for larger slopes. To
understand this, recall that $p(x) = \max(0, 1 - x/\text{slope})$, so the acceptance
probability depends not on $x$ alone but on the ratio $x/\text{slope}$. As the slope
increases the provider commits to a later arrival time, but $x$ need not grow in proportion
to the slope. From slope $= 15$ to slope $= 30$, the optimal commitment grows faster than
the slope, so the ratio $x/\text{slope}$ rises and the acceptance probability falls. Beyond
slope $= 30$, the commitment keeps increasing but more slowly than the slope, so the ratio
declines steadily and the acceptance probability rises.

A related phenomenon is the pronounced gap between BSH-\ac{LKH} and the optimal
acceptance probability at low slope values. At slope $\in \{15, 30\}$, the
overestimation of the arrival commitment by BSH-\ac{LKH} translates into a substantial
reduction in acceptance probability, because the sensitivity of $p$ to changes in
$x$ is $1/\text{slope}$, which is large when the slope is small. This accounts for
the divergence between the dashed and solid lines at low slopes in
Figure~\ref{fig:acceptanceProbasSlope}. As the slope increases, this sensitivity
diminishes, and the underestimation of the arrival commitment observed for slope
values of 75 and above has a progressively smaller effect on the resulting acceptance
probability. The gap between BSH-\ac{LKH} and the optimal is therefore smallest around
the baseline slope of 60. For $n \geq 10$ and slope $\geq 75$, BSH-\ac{LKH} even exceeds
the optimal acceptance probability. This is not an improvement but the flip side of the
underestimated commitment: the heuristic offers an earlier arrival time than optimal,
which more customers accept, at the cost of serving a smaller share of them.

\begin{figure}[h]
	\centering
	\begin{subfigure}{0.48\textwidth}
		\centering
		\includegraphics[width=\linewidth]{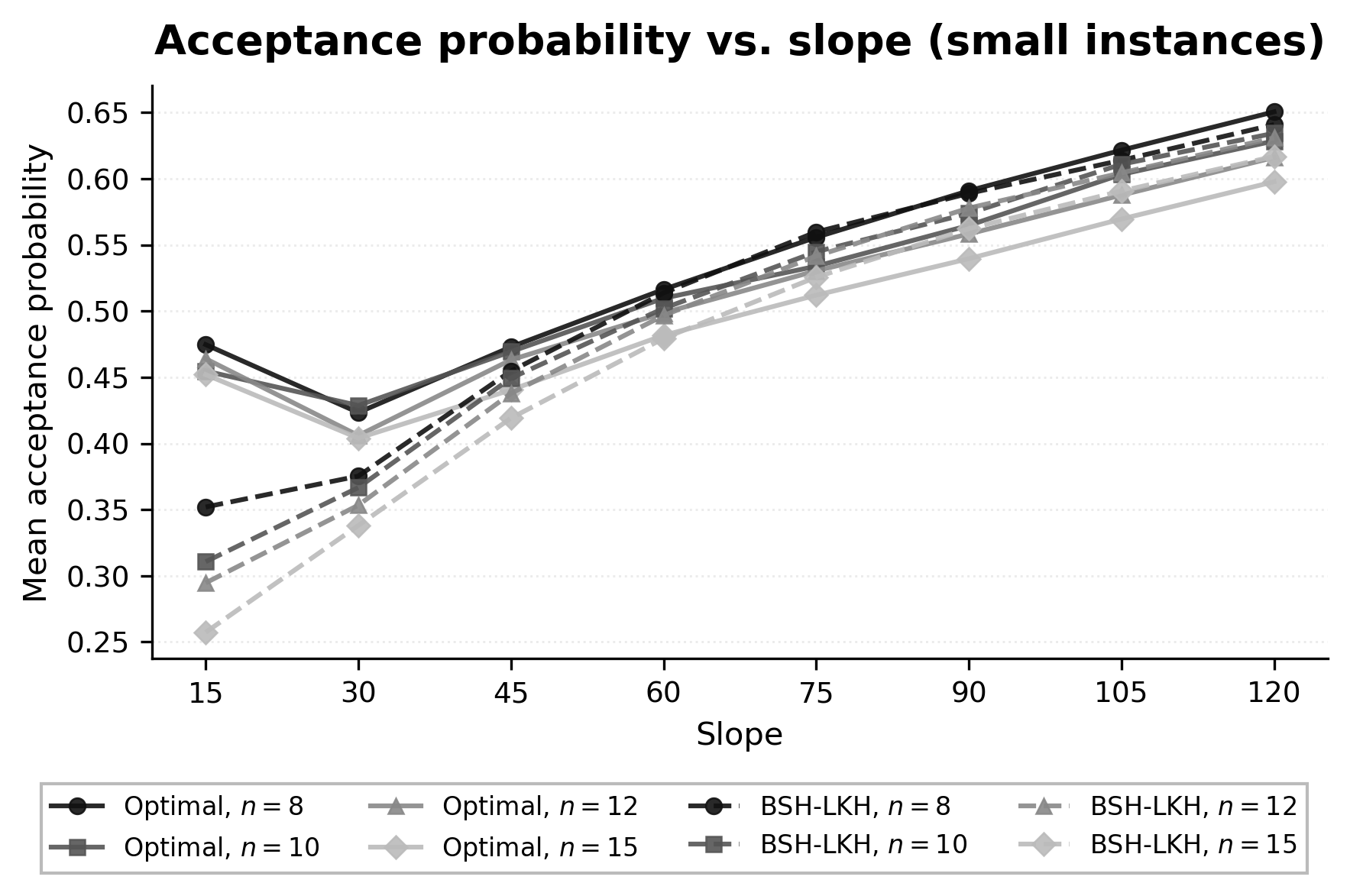}
		\caption{Small instances.}
		\label{fig:acceptanceProbasSlopeSmall}
	\end{subfigure}
	\hfill
	\begin{subfigure}{0.48\textwidth}
		\centering
		\includegraphics[width=\linewidth]{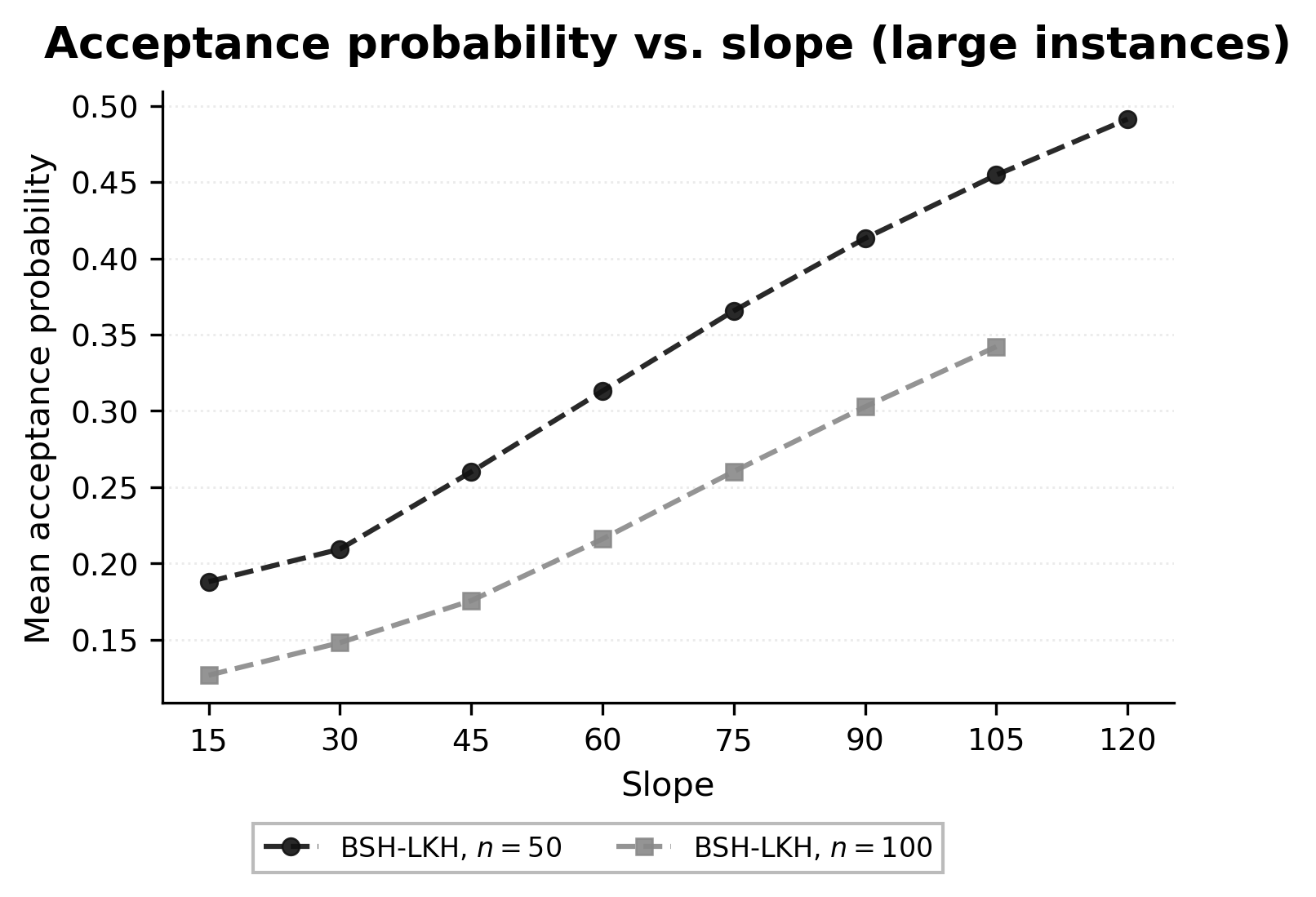}
		\caption{Large instances.}
		\label{fig:acceptanceProbasSlopeLarge}
	\end{subfigure}
	\caption{Mean customer acceptance probability at the revenue-maximizing commitment as a function of the slope.}
	\label{fig:acceptanceProbasSlope}
\end{figure}


\begin{table}[t]
\centering
\small
\begin{tabular}{c l rrrrrrrr}
	\toprule
	& & \multicolumn{8}{c}{Slope} \\
	\cmidrule(lr){3-10}
	$n$ & Algorithm & 15 & 30 & 45 & 60 & 75 & 90 & 105 & 120 \\
	\midrule
	\multirow{2}{*}{4} & Optimal & 0.411 & 0.497 & 0.535 & 0.574 & 0.621 & 0.644 & 0.678 & 0.708 \\
	& BSH-LKH & 0.384 & 0.467 & 0.526 & 0.568 & 0.602 & 0.638 & 0.671 & 0.707 \\
	\midrule
	\multirow{2}{*}{8} & Optimal & 0.475 & 0.423 & 0.473 & 0.516 & 0.556 & 0.591 & 0.622 & 0.651 \\
	& BSH-LKH & 0.352 & 0.375 & 0.455 & 0.514 & 0.560 & 0.589 & 0.614 & 0.641 \\
	\midrule
	\multirow{2}{*}{10} & Optimal & 0.455 & 0.429 & 0.469 & 0.510 & 0.534 & 0.565 & 0.603 & 0.628 \\
	& BSH-LKH & 0.311 & 0.367 & 0.449 & 0.502 & 0.545 & 0.574 & 0.611 & 0.634 \\
	\midrule
	\multirow{2}{*}{12} & Optimal & 0.464 & 0.406 & 0.463 & 0.499 & 0.530 & 0.558 & 0.588 & 0.616 \\
	& BSH-LKH & 0.295 & 0.353 & 0.438 & 0.497 & 0.542 & 0.578 & 0.604 & 0.631 \\
	\midrule
	\multirow{2}{*}{15} & Optimal & 0.452 & 0.404 & 0.440 & 0.482 & 0.512 & 0.540 & 0.570 & 0.598 \\
	& BSH-LKH & 0.257 & 0.338 & 0.419 & 0.479 & 0.526 & 0.562 & 0.591 & 0.617 \\
	\midrule
	\multirow{1}{*}{20} & BSH-LKH & 0.214 & 0.303 & 0.384 & 0.444 & 0.497 & 0.534 & 0.568 & 0.597 \\
	\midrule
	\multirow{1}{*}{50} & BSH-LKH & 0.188 & 0.209 & 0.260 & 0.313 & 0.366 & 0.413 & 0.455 & 0.491 \\
	\midrule
	\multirow{1}{*}{100} & BSH-LKH & 0.127 & 0.148 & 0.176 & 0.216 & 0.260 & 0.303 & 0.342 & -- \\
	\bottomrule
\end{tabular}
\caption{Mean customer acceptance probability at the revenue-maximizing commitment by instance size and slope, optimal vs.\ BSH-\ac{LKH}.}
\label{tab:cust_acc_slope_grid}
\end{table}

\subsection{Normalized Revenue}
Table~\ref{tab:norm_revenue_slope_grid} reports the normalized revenue by
instance size and slope. In contrast to
the customer acceptance probability, the normalized revenue increases
monotonically with the slope for every instance size, with no dip at slope $= 30$, again
driven by the mechanism stated above. At a fixed slope, it decreases in $n$: for
slope $= 60$ from $39.1\%$ at $n = 8$ to $15.6\%$ at $n = 100$ for BSH-\ac{LKH}. As the
customer base grows, the commitment covers a smaller fraction of the full-service tour
(Table~\ref{tab:comm_tour_slope_grid}), so a smaller share of accepting customers is
served.

Across the small instances, BSH-\ac{LKH} tracks the optimal closely, with an absolute gap
of at most $2.3$ percentage points over the entire slope range. The discrepancy is largest
at slope $= 15$, where BSH-\ac{LKH} underestimates the normalized revenue for every $n$
(e.g., $6.1\%$ vs.\ $7.9\%$ at $n = 15$), and it diminishes markedly from slope $= 45$
onward.
This low-slope underestimation is consistent with the \ac{LKH} tour-length overestimation
established in Section~\ref{sec:estimation_error}: at short commitments, feasibility is
assessed against inflated tour lengths, so a larger fraction of subsets is classified as
infeasible and the served share is understated.

\begin{table}[t]
\centering
\small
\begin{tabular}{c l rrrrrrrr}
	\toprule
	& & \multicolumn{8}{c}{Slope} \\
	\cmidrule(lr){3-10}
	$n$ & Algorithm & 15 & 30 & 45 & 60 & 75 & 90 & 105 & 120 \\
	\midrule
	\multirow{2}{*}{4} & Optimal & 0.136 & 0.285 & 0.398 & 0.488 & 0.559 & 0.617 & 0.667 & 0.707 \\
	& BSH-LKH & 0.120 & 0.271 & 0.388 & 0.481 & 0.554 & 0.615 & 0.665 & 0.706 \\
	\midrule
	\multirow{2}{*}{8} & Optimal & 0.101 & 0.213 & 0.311 & 0.394 & 0.463 & 0.521 & 0.570 & 0.611 \\
	& BSH-LKH & 0.078 & 0.203 & 0.308 & 0.391 & 0.458 & 0.515 & 0.564 & 0.606 \\
	\midrule
	\multirow{2}{*}{10} & Optimal & 0.092 & 0.197 & 0.292 & 0.369 & 0.437 & 0.497 & 0.546 & 0.589 \\
	& BSH-LKH & 0.072 & 0.191 & 0.293 & 0.372 & 0.439 & 0.496 & 0.544 & 0.585 \\
	\midrule
	\multirow{2}{*}{12} & Optimal & 0.086 & 0.186 & 0.277 & 0.354 & 0.420 & 0.477 & 0.526 & 0.568 \\
	& BSH-LKH & 0.068 & 0.184 & 0.283 & 0.363 & 0.428 & 0.482 & 0.528 & 0.568 \\
	\midrule
	\multirow{2}{*}{15} & Optimal & 0.079 & 0.170 & 0.258 & 0.331 & 0.396 & 0.453 & 0.502 & 0.544 \\
	& BSH-LKH & 0.061 & 0.171 & 0.269 & 0.347 & 0.410 & 0.464 & 0.510 & 0.549 \\
	\midrule
	\multirow{1}{*}{20} & BSH-LKH & 0.049 & 0.152 & 0.245 & 0.323 & 0.387 & 0.441 & 0.487 & 0.527 \\
	\midrule
	\multirow{1}{*}{50} & BSH-LKH & 0.022 & 0.092 & 0.164 & 0.233 & 0.294 & 0.350 & 0.397 & 0.439 \\
	\midrule
	\multirow{1}{*}{100} & BSH-LKH & 0.013 & 0.053 & 0.104 & 0.156 & 0.207 & 0.256 & 0.301 & -- \\
	\bottomrule
\end{tabular}
\caption{Mean normalized revenue by instance size and slope, optimal vs.\ BSH-\ac{LKH}.}
\label{tab:norm_revenue_slope_grid}
\end{table}

\subsection{Service Rate}
Table~\ref{tab:service_rate_slope_grid} reports the service rate
(Section~\ref{subsec:Managerial}) by instance size and slope. In line with the
normalized revenue, the service rate increases monotonically with the slope for every
instance size, again driven by the mechanism stated above. The effect is
pronounced: for $n = 100$ the service rate rises from $10.3\%$ at slope $= 15$ to $88.1\%$
at slope $= 105$. Low slopes thus correspond to impatient markets in which the provider
must turn away a large fraction of accepting customers, whereas at high slopes almost every
accepting customer is served. At the baseline slope $= 60$, the reported service rate
settles in a narrow band around $72$--$74\%$ for the larger instances. As discussed in
Section~\ref{subsec:Managerial}, the \textsc{PartialRev} overestimation inflates this
reported rate; the true rate is closer to $69\%$, so the revenue-maximizing provider still
turns away roughly one in three accepting customers.

The bias of BSH-\ac{LKH} relative to the optimal follows the arrival commitment bias. The
service rate is overestimated at low slope values (slope $\leq 45$, for every $n$), agrees
closely near slope $= 60$, and is underestimated at high slope values (slope $\geq 90$) for $n \geq 10$. 
The service rate is the ratio of the normalized revenue
(Table~\ref{tab:norm_revenue_slope_grid}) to the customer acceptance probability
(Table~\ref{tab:cust_acc_slope_grid}). At low slopes, the overestimated commitment
depresses the denominator more than the numerator, so the ratio is inflated; at high
slopes, the underestimated commitment has the opposite effect, and the ratio is deflated.
As with the other quantities, the discrepancy narrows around
the baseline slope, where the commitment bias changes sign.

\begin{table}[t]
	\centering
	\small
	\begin{tabular}{c l rrrrrrrr}
		\toprule
		& & \multicolumn{8}{c}{Slope} \\
		\cmidrule(lr){3-10}
		$n$ & Algorithm & 15 & 30 & 45 & 60 & 75 & 90 & 105 & 120 \\
		\midrule
		\multirow{2}{*}{4} & Optimal & 0.331 & 0.573 & 0.745 & 0.850 & 0.900 & 0.958 & 0.984 & 0.998 \\
		& BSH-LKH & 0.313 & 0.581 & 0.739 & 0.847 & 0.920 & 0.964 & 0.992 & 0.998 \\
		\midrule
		\multirow{2}{*}{8} & Optimal & 0.213 & 0.504 & 0.659 & 0.764 & 0.833 & 0.882 & 0.917 & 0.940 \\
		& BSH-LKH & 0.220 & 0.541 & 0.677 & 0.760 & 0.818 & 0.875 & 0.918 & 0.945 \\
		\midrule
		\multirow{2}{*}{10} & Optimal & 0.202 & 0.459 & 0.621 & 0.725 & 0.819 & 0.880 & 0.906 & 0.937 \\
		& BSH-LKH & 0.231 & 0.520 & 0.652 & 0.741 & 0.806 & 0.864 & 0.890 & 0.922 \\
		\midrule
		\multirow{2}{*}{12} & Optimal & 0.186 & 0.458 & 0.598 & 0.710 & 0.792 & 0.855 & 0.896 & 0.923 \\
		& BSH-LKH & 0.229 & 0.519 & 0.647 & 0.731 & 0.790 & 0.834 & 0.873 & 0.900 \\
		\midrule
		\multirow{2}{*}{15} & Optimal & 0.175 & 0.420 & 0.585 & 0.688 & 0.773 & 0.839 & 0.881 & 0.911 \\
		& BSH-LKH & 0.237 & 0.506 & 0.642 & 0.724 & 0.781 & 0.825 & 0.863 & 0.890 \\
		\midrule
		\multirow{1}{*}{20} & BSH-LKH & 0.229 & 0.502 & 0.639 & 0.728 & 0.779 & 0.825 & 0.857 & 0.882 \\
		\midrule
		\multirow{1}{*}{50} & BSH-LKH & 0.117 & 0.440 & 0.631 & 0.744 & 0.805 & 0.847 & 0.873 & 0.894 \\
		\midrule
		\multirow{1}{*}{100} & BSH-LKH & 0.103 & 0.360 & 0.593 & 0.724 & 0.797 & 0.847 & 0.881 & -- \\
		\bottomrule
	\end{tabular}
	\caption{Mean service rate by instance size and slope, optimal vs.\ BSH-\ac{LKH}.}
	\label{tab:service_rate_slope_grid}
\end{table}


\end{document}